\newcolumntype{L}{>{$}l<{$}} 
\newcolumntype{C}{>{$}c<{$}}
\newtheorem{theorem}{Theorem}[section]
\newtheorem{lemma}[theorem]{Lemma}
\newtheorem{cor}[theorem]{Corollary}
\newtheorem{prop}[theorem]{Proposition}
\newtheorem{setup}[theorem]{Setup}
\theoremstyle{definition}
\newtheorem{definition}[theorem]{Definition}
\newtheorem{example}[theorem]{Example}
\newtheorem{notation}[theorem]{Notation}
\theoremstyle{remark}
\newtheorem{remark}[theorem]{Remark}
\newtheorem{the context}[theorem]{The Context}
\numberwithin{equation}{theorem}
\numberwithin{equation}{section}
\newcommand{\cat}[1]{\mathcal{#1}}
\newcommand{\rank}{\operatorname{rank}}
\newcommand{\soc}{\operatorname{Soc}}
\newcommand{\tor}{\operatorname{Tor}}
\newcommand{\im}{\operatorname{Im}}
\newcommand{\Ker}{\operatorname{Ker}}
\newcommand{\ideal}[1]{\mathfrak{#1}}
\newcommand{\m}{\ideal{m}}
\renewcommand{\geq}{\geqslant}
\renewcommand{\leq}{\leqslant}
\renewcommand{\ker}{\Ker}
\renewcommand{\hom}{\Hom}
\newcommand{\Hom}{\operatorname{Hom}}
\newcommand{\maps}[5]{\xymatrix{#1 \ar[r]^-{#3} & #2 \\
#4 \ar@{|->}[r] & #5 \\}}
\newcommand{\mfa}{\mathfrak{a}}
\def\w{\wedge}
\def\im{\operatorname{Im}}
\newcommand{\rk}{\textrm{rk}}
\begin{document}
\title[Trimming Complexes]{Trimming Complexes and Applications to Resolutions of Determinantal Facet Ideals}

\author{Keller VandeBogert }
\date{\today}

\maketitle

\begin{abstract}
    We produce a family of complexes called trimming complexes and explore applications. We demonstrate how trimming complexes can be used to deduce the Betti table for the minimal free resolution of the ideal generated by certain subsets of a generating set for an arbitrary ideal $I$. In particular, we compute the Betti table of the ideal obtained by removing an arbitrary generator from the ideal of submaximal pfaffians of a generic skew symmetric matrix $M$. We also explicitly compute the Betti table for the ideal generated by certain subsets of the generating set of the ideal of maximal minors of a generic $n \times m$ matrix. Such ideals are a subset of a class of ideals called determinantal facet ideals, whose higher degree Betti numbers had not previously been computed.
\end{abstract}

\section{Introduction}

Let $(R, \m , k)$ be a regular local ring with maximal ideal $\m$ and residue field $k$. Given an $\m$-primary ideal $I = (\phi_1 , \dots , \phi_n) \subseteq R$, one can ``trim" the ideal $I$ by, for instance, forming the ideal $(\phi_1 , \dots , \phi_{n-1} ) + \m \phi_n$. This process is used by Christensen, Veliche, and Weyman (see \cite{christensen2019trimming}) in the case that $R/I$ is a Gorenstein ring to produce ideals defining rings with certain Tor algebra classification, negatively answering a question of Avramov in \cite{avramov2012cohomological}. 

This trimming procedure also arises in classifying certain type $2$ ideals defining compressed rings. More precisely, it is shown in \cite{vandebogert2019structure} that every homogeneous grade $3$ ideal $I \subseteq k[x,y,z]$ defining a compressed ring with socle $\soc (R/I) = k(-s) \oplus k(-2s+1)$ is obtained by trimming a Gorenstein ideal. A complex is produced that resolves all such ideals; it is generically minimal. This resolution is then used to bound the minimal number of generators and, consequently, parameters arising in the Tor algebra classification.

In this paper, we generalize the resolution of \cite[Theorem 5.4]{vandebogert2019structure}, mentioned in the previous paragraph. To be precise, given an ideal $I = (\phi_1 , \dotsc , \phi_n)$ we construct a resolution of $(\phi_1 , \dotsc , \phi_{n-1}) + \mfa \phi_n$ for an arbitrary ideal $\mfa$; the setting explored in \cite{vandebogert2019structure} is for $I \subseteq k[x,y,z]$ a grade $3$ Gorenstein ideal and $\mfa = (x,y,z)$. Moreover, we wish to explore applications of these complexes in situations that are unrelated to the setting of the previous paragraph. One such case is for computing the graded Betti numbers of the ideal generated by certain subsets of the standard generating set of the ideal of maximal minors of a generic $n \times m$ matrix $M$ (see Theorem \ref{subres}). 

The homogeneous minimal free resolution of ideals generated by all minors of a given size of some matrix is well understood (see, for instance, \cite{bruns2006determinantal}). It is less well understood what the minimal resolution/Betti table of the ideal generated by \emph{subsets} of these minors must be. Certain classes of subsets have applications in algebraic statistics, including the adjacent $2$-minors of an arbitrary matrix and arbitrary subsets of a $2 \times n$ matrix are considered (see \cite{herzog2010binomial}, \cite{hocsten2004ideals} for the former case). The latter case has been studied by Herzog et al (see \cite{herzog2012ideals}); in particular, such ideals are always radical, and the primary decomposition and Gr\"obner basis are known. 

In \cite{ene2013determinantal}, so-called determinantal facet ideals are studied. Every maximal minor has an associated simplex, and a collection of minors can then be indexed by the facets of a certain simplicial complex $\Delta$ on the vertex set $\{ 1 , \dots , n \}$, for some $n$. Properties of the determinantal facet ideal may be deduced from properties of $\Delta$. A study of the homological properties of these ideals is conducted in \cite{herzog2017linear}; in particular, the Betti numbers of the linear strand of the minimal free resolution of these ideals is computed in terms of the $f$-vector of the associated clique complex.

In this paper, we consider a subset of the cases addressed in \cite{herzog2017linear}; however, we compute Betti numbers \emph{explicitly} in all degrees, instead of just the linear strand, and our formulas do not depend on any combinatorial machinery. We also deduce that the ideals under consideration are never linearly presented and hence never have linear resolutions. 

The paper is organized as follows. Sections \ref{trimcx} and \ref{ittrimcx} introduce the main machinery of the paper: the trimming complex and iterated trimming complex. We prove that these complexes are resolutions that are not necessarily minimal. However, due to the simple nature of the differentials, one can deduce the ranks appearing in the minimal free resolution of the ideal of interest.

In Section \ref{ressubs}, we show how to use the complex of Section \ref{trimcx} to resolve ideals generated by certain subsets of a minimal generating set of an arbitrary ideal $I$. As applications, we compute the Betti tables of the ideals obtained by removing a single generator from the ideal of submaximal pfaffians (see Proposition \ref{prop:btabforpfaffs}) and from the ideal of maximal minors of a generic $n \times m$ matrix $M$ (see Theorem \ref{thm:btabforsinglemaxlminor}). In Section \ref{maxlminors} we use the iterated trimming complex of Section \ref{ittrimcx} to compute the Betti tables of ideals obtained by removing certain additional generators from the generating set of the ideal of maximal minors of a generic $n \times m$ matrix $M$. As an application, we are able to deduce pieces of the $f$-vector of the simplicial complex associated to certain classes of uniform clutters.

\section{Trimming Complexes}\label{trimcx}

In this section, we introduce the notion of trimming complexes and show that, in fact, these complexes are resolutions. We begin by defining the quotient rings we aim to resolve and setting up the notation that we will use throughout the section.

\begin{setup}\label{setup1}
Let $R=k[x_1,\dots, x_n]$ be a standard graded polynomial ring over a field $k$. Let $I \subseteq R$ be a homogeneous ideal and $(F_\bullet, d_\bullet)$ denote a homogeneous free resolution of $R/I$. 

Write $F_1 = F_1' \oplus Re_0$, where $e_0$ generates a free direct summand of $F_1$. Using the isomorphism
$$\hom_R (F_2 , F_1 ) = \hom_R (F_2,F_1') \oplus \hom_R (F_2 , Re_0)$$
write $d_2 = d_2' + d_0$, where $d_2' \in \hom_R (F_2,F_1')$, $d_0 \in \hom_R (F_2 , Re_0)$. Let $\mfa$ denote any homogeneous ideal with
$$d_0 (F_2) \subseteq \mfa e_0,$$
and $(G_\bullet , m_\bullet)$ be a homogeneous free resolution of $R/\mfa$. 

Use the notation $K' := \im (d_1|_{F_1'} : F_1' \to R)$, $K_0 := \im (d_1|_{Re_0} : Re_0 \to R)$, and let $J := K' + \mfa \cdot K_0$.
\end{setup}

Our goal is to construct a resolution of the quotient ring $R/J$ as in Setup \ref{setup1}. Observe that the length of $G_\bullet$ does not have to equal the length of $F_\bullet$. 

\begin{prop}\label{colon}
Adopt notation and hypotheses as in Setup \ref{setup1}. Then
$$(K' : K_0 ) \subseteq \mfa.$$
\end{prop}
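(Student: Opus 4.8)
The plan is to interpret the containment $(K' : K_0) \subseteq \mfa$ as a statement about the cokernel of a certain map, and to extract it from the fact that $F_\bullet$ is a complex (in fact a resolution). Recall that $K' = \im(d_1|_{F_1'})$ and $K_0 = \im(d_1|_{Re_0})$, and since $F_1 = F_1' \oplus Re_0$ we have $I = K_1 = K' + K_0$. Write $d_1(e_0) = \phi_0 \in R$, so that $K_0 = R\phi_0$ and the colon ideal $(K' : K_0) = (K' : \phi_0) = \{ r \in R : r\phi_0 \in K' \}$. The key structural input is the relation $d_1 \circ d_2 = 0$: decomposing $d_2 = d_2' + d_0$ with $d_0(F_2) \subseteq \mfa e_0$, applying $d_1$ gives $d_1(d_2'(y)) + d_1(d_0(y)) = 0$ for all $y \in F_2$, i.e. $d_1(d_0(y)) = -d_1(d_2'(y)) \in K'$. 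Since $d_0(y) \in \mfa e_0$, this says exactly that $\mfa \phi_0 \subseteq K'$ modulo nothing — but that is the wrong direction; I need to go the other way.

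So the real content is: if $r\phi_0 \in K'$, then $r \in \mfa$. First I would reduce to showing $(K':\phi_0) \cdot \overline{R\phi_0} = 0$ inside $R/(K' + \text{something})$... more precisely, suppose $r\phi_0 \in K'$, so $r\phi_0 = d_1(z)$ for some $z \in F_1'$. Then $re_0 - z \in F_1$ is a cycle: $d_1(re_0 - z) = r\phi_0 - d_1(z) = 0$. Since $F_\bullet$ is a resolution, $H_1(F_\bullet) = 0$, so $re_0 - z = d_2(y)$ for some $y \in F_2$. Now project onto the $Re_0$ summand: the $Re_0$-component of $d_2(y)$ is $d_0(y) \in \mfa e_0$ by hypothesis, while the $Re_0$-component of $re_0 - z$ is $re_0$ (as $z \in F_1'$). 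Hence $re_0 = d_0(y) \in \mfa e_0$, which forces $r \in \mfa$. This is the whole argument.

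The main obstacle — really the only place to be careful — is the appeal to $H_1(F_\bullet) = 0$: this is where we genuinely use that $F_\bullet$ is a resolution of $R/I$ and not merely a complex, and one should make sure the projection-onto-$Re_0$ step is stated cleanly, since the isomorphism $\hom_R(F_2, F_1) \cong \hom_R(F_2, F_1') \oplus \hom_R(F_2, Re_0)$ from Setup \ref{setup1} is exactly what licenses splitting $d_2(y)$ into its $F_1'$-part $d_2'(y)$ and its $Re_0$-part $d_0(y)$. Everything else is formal. I would present it in the three displayed steps above: (i) lift an element of $(K':\phi_0)$ to a cycle $re_0 - z$ in $F_1$; (ii) use exactness at $F_1$ to write it as $d_2(y)$; (iii) compare $Re_0$-components to conclude $r \in \mfa$.
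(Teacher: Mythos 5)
Your argument is correct and is essentially identical to the paper's proof: both lift $r \in (K':K_0)$ to a cycle in $F_1$ of the form $re_0$ plus an element of $F_1'$, use exactness at $F_1$ to write it as $d_2(y)$, and compare $Re_0$-components via the splitting $d_2 = d_2' + d_0$ to conclude $re_0 = d_0(y) \in \mfa e_0$. The brief detour in your first paragraph about $d_1 \circ d_2 = 0$ giving the containment in the wrong direction is harmless and correctly abandoned.
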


\begin{proof}
Let $r \in R$ with $r K_0 \subseteq K'$. By definition there exists $e' \in F_1'$ such that
$$d_1 ( e' + re_0) = 0.$$
By exactness of $F_\bullet$, there exists $f \in F_2$ with $d_2 ( f) = e' + re_0$. Employing the decomposition $d_2 = d_2' + d_0$, we find
$$d_0 (f) - re_0 = e' - d_2' (f) \in F_1' \cap Re_0 = 0$$
whence $d_0 (f) = re_0$. By selection of $\mfa$, we conclude $r \in \mfa$.
\end{proof}

\begin{prop}\label{1stmap}
Adopt notation and hypotheses as in Setup \ref{setup1}. Then there exists a map $q_1 : F_2 \to G_1$ such that the following diagram commutes:
$$\xymatrix{& F_2 \ar[dl]_-{q_1} \ar[d]^{d'_0} \\
G_1 \ar[r]_{m_1} & \mfa, \\}$$
where $d_0' : F_2 \to R$ is the composition
$$\xymatrix{F_2 \ar[r]^{d_0} & Re_0 \ar[r] & R\\},$$
and where the second map sends $e_0 \mapsto 1$.
\end{prop}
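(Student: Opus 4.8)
The plan is to produce $q_1$ by a standard projective--lifting argument; the only step with any real content is the observation that $d_0'$ genuinely takes values in $\mfa$. So the first thing I would record is exactly this containment: by the defining property of $\mfa$ in Setup~\ref{setup1} we have $d_0(F_2) \subseteq \mfa e_0$, and the graded isomorphism $Re_0 \cong R$ sending $e_0 \mapsto 1$ carries $\mfa e_0$ onto $\mfa$. Hence $d_0'(F_2) \subseteq \mfa$, and since $d_0$ is a graded component of the differential $d_2$, the map $d_0'$ is homogeneous; thus we may regard $d_0'$ as a homogeneous map $F_2 \to \mfa$.

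Next I would observe that $m_1 \colon G_1 \to G_0 = R$ surjects onto $\mfa$. Indeed, $(G_\bullet, m_\bullet)$ is a homogeneous free resolution of $R/\mfa$, so (taking $G_0 = R$, as we may) exactness at $G_0$ gives $\im(m_1) = \ker(R \to R/\mfa) = \mfa$. Thus $m_1$ restricts to a surjection of graded $R$-modules $G_1 \onto \mfa$.

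Finally, since $F_2$ is a free, hence projective, graded $R$-module, the homogeneous map $d_0' \colon F_2 \to \mfa$ lifts through the surjection $m_1 \colon G_1 \onto \mfa$: there is a homogeneous $R$-linear map $q_1 \colon F_2 \to G_1$ with $m_1 \circ q_1 = d_0'$, which is precisely the commutativity of the displayed diagram. I do not expect any genuine obstacle here; the only items to keep track of are the internal degree shifts, so that $q_1$ is honestly homogeneous, and the fact that $q_1$ is far from unique. This latitude in the choice of $q_1$ is what will be exploited later when assembling the trimming complex, but it plays no role in the existence statement proved in this proposition.
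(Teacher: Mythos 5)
Your argument is correct and is exactly the one the paper intends: the paper's entire proof is the single sentence that the claim ``follows directly from the fact that $F_2$ is projective,'' and your write-up just supplies the two implicit ingredients (that $d_0'(F_2)\subseteq\mfa$ by the defining property of $\mfa$, and that $m_1$ surjects onto $\mfa$ by exactness of $G_\bullet$) before invoking the lifting property. No further comment is needed.
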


\begin{proof}
This follows directly from the fact that $F_2$ is projective.
\end{proof}

\begin{prop}\label{theyexist}
Adopt notation and hypotheses as in Setup \ref{setup1}. Then there exist maps $q_k : F_{k+1} \to G_{k}$ for all $k \geq 2$ such that the following diagram commutes:
$$\xymatrix{F_{k+1} \ar[d]_{q_k} \ar[r]^-{d_{k+1}} & F_k \ar[d]^{q_{k-1}} \\
G_k \ar[r]_{m_k} & G_{k-1} \\}$$
\end{prop}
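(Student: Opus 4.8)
The plan is to construct the maps $q_k$ by induction on $k$, following the usual comparison‑map argument: given the candidate $q_{k-1}d_{k+1}$, one first checks that it lands inside $\im m_k$ (this uses exactness of $G_\bullet$ in positive degrees), and then lifts it along the surjection $G_k \onto \im m_k$, which is possible because $F_{k+1}$ is free.

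For the base case $k=2$ I would show that $q_1 d_3 \colon F_3 \to G_1$ has image inside $\ker m_1 = \im m_2$. Since $m_1 q_1 = d_0'$ by Proposition \ref{1stmap}, it suffices to check $d_0' d_3 = 0$, equivalently (composing with the isomorphism $Re_0 \to R$, $e_0\mapsto 1$) that $d_0 d_3 = 0$. This drops out of $d_2 d_3 = 0$ together with the decomposition $d_2 = d_2' + d_0$: the identity $d_2' d_3 + d_0 d_3 = 0$ lies in $\hom_R(F_3,F_1') \oplus \hom_R(F_3,Re_0)$, and the directness of this sum forces each summand to vanish. Hence $q_1 d_3$ factors through $\im m_2$, and freeness of $F_3$ produces a lift $q_2 \colon F_3 \to G_2$ with $m_2 q_2 = q_1 d_3$.

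For the inductive step, assume $k \geq 3$ and that $q_2,\dots,q_{k-1}$ have already been built with $m_j q_j = q_{j-1} d_{j+1}$ for $2 \le j \le k-1$, where $q_1$ is the map of Proposition \ref{1stmap}. Then
$$m_{k-1}\bigl(q_{k-1} d_{k+1}\bigr) = q_{k-2}\, d_k d_{k+1} = 0,$$
so $q_{k-1} d_{k+1}$ has image in $\ker m_{k-1} = \im m_k$ by exactness of $G_\bullet$ at $G_{k-1}$; freeness of $F_{k+1}$ then yields $q_k \colon F_{k+1} \to G_k$ with $m_k q_k = q_{k-1} d_{k+1}$. Since all differentials are homogeneous, the lifts may be chosen homogeneous of the correct internal degrees.

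I do not anticipate a genuine obstacle. The only subtle point is the base case: one must know that $q_1 d_3$ lands in $\im m_1 = \mfa$‑worth of relations rather than merely in $G_0 = R$, and this is exactly where the standing hypothesis $d_0(F_2) \subseteq \mfa e_0$ and the splitting $F_1 = F_1' \oplus Re_0$ enter (through Proposition \ref{1stmap}). One could alternatively absorb $k=2$ into the general step by introducing a formal map $q_0 \colon F_1 \to G_0 = R$, but treating the base case separately as above seems cleanest.
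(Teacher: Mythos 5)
Your proposal is correct and follows essentially the same inductive lifting argument as the paper: the base case reduces to $m_1 q_1 d_3 = d_0' d_3 = 0$ (the paper asserts this directly, while you justify it via the directness of the splitting $\hom_R(F_3,F_1')\oplus\hom_R(F_3,Re_0)$), and the inductive step uses $m_{k-1}q_{k-1}d_{k+1} = q_{k-2}d_k d_{k+1} = 0$ together with exactness of $G_\bullet$ and projectivity of $F_{k+1}$. No discrepancies.
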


\begin{proof}
We build the $q_k$ inductively. For $k=2$, observe that
$$m_1 \circ q_1 \circ d_3 = d'_0 \circ d_3 = 0,$$
so there exists $q_2 : F_3 \to G_2$ making the desired diagram commute. For $k>2$, we assume that $q_{k-1}$ has already been constructed. Then
$$m_{k-1} \circ q_{k-1} \circ d_{k+1} = q_{k-2} \circ d_k \circ d_{k+1} = 0,$$
so the desired map $q_k$ exists.
\end{proof}

\begin{theorem}\label{theres}
Adopt notation and hypotheses as in Setup \ref{setup1}. Then the mapping cone of the morphism of complexes
\begin{equation}\label{comx}
\xymatrix{\cdots \ar[r]^{d_{k+1}} &  F_{k} \ar[d]^{q_{k-1}}\ar[r]^{d_{k}} & \cdots \ar[r]^{d_3} & F_2 \ar[rr]^{d_2'} \ar[d]^{q_1} && F_1' \ar[d]^{d_1} \\
\cdots \ar[r]^{m_k} &G_{k-1} \ar[r]^{m_{k-1}} & \cdots \ar[r]^{m_2} & G_1 \ar[rr]^-{-m_1(-)\cdot d_1(e_0)} && R \\}\end{equation}
is acyclic and is a free resolution of $R/J$.
\end{theorem}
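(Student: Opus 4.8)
The overall strategy is to identify the mapping cone of the morphism in \eqref{comx} as a complex $C_\bullet$ with $C_0 = R$, $C_1 = F_1'$, $C_2 = G_1 \oplus F_2$, and $C_k = G_{k-1} \oplus F_k$ for $k \geq 2$, and then show it is acyclic by a combination of a direct computation of $\HH_0$ and $\HH_1$, and a homological/spectral-sequence-free diagram chase for the higher homology. First I would record the differentials of the cone explicitly: the degree-one part is $d_1: F_1' \to R$, the degree-two part is $(g, f) \mapsto d_1(e_0)\cdot m_1(g)\,(\text{up to sign}) \,?\,$—more precisely $\partial_2(g,f) = \big(d_2'(f),\ q_1(f) - m_2(\cdots)\big)$ with the appropriate cone sign conventions—and in general $\partial_k(g,f) = \big(q_{k-1}(f) + (-1)^? m_k(g),\ d_k(f)\big)$. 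The commuting diagrams of Propositions \ref{1stmap} and \ref{theyexist} are exactly what is needed for $\partial^2 = 0$, so the cone is genuinely a complex.

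For $\HH_0$: the image of $\partial_1$ together with the image of the ``$-m_1(-)\cdot d_1(e_0)$'' component shows that $\HH_0$ of the cone is $R/(K' + d_1(e_0)\cdot m_1(G_1)) = R/(K' + d_1(e_0)\cdot\mfa) = R/(K' + \mfa K_0) = R/J$, using that $m_1(G_1) = \mfa$ and $d_1(e_0)$ generates $K_0$. That gives the last assertion of the theorem once acyclicity is known. For acyclicity in positive degrees, the cleanest route is the long exact sequence in homology associated to the mapping cone: there is a short exact sequence of complexes $0 \to G_{\bullet-1}[\text{shift}] \to C_\bullet \to (F_{\geq 1}'\text{-complex}) \to 0$, yielding a long exact sequence relating $\HH_*(C)$ to the homology of the top row complex $\cdots \to F_2 \xrightarrow{d_2'} F_1' \to 0$ and the bottom row complex $\cdots \to G_1 \xrightarrow{-m_1(-)d_1(e_0)} R \to 0$, with connecting map induced by $(q_k)$. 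The bottom row is just (a twist of) the resolution $G_\bullet$ of $R/\mfa$ augmented to $R$, so its homology is $R/\mfa$ in degree $0$ and $0$ above; the top row $F_\bullet'$ has homology I must compute.

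The main obstacle—and the technical heart of the argument—is identifying the homology of the truncated-and-modified top complex $F_\bullet' : \cdots \to F_3 \xrightarrow{d_3} F_2 \xrightarrow{d_2'} F_1' \to 0$, where $d_2'$ has been substituted for $d_2$. This is no longer a subcomplex of $F_\bullet$, so its homology is not obviously that of $F_\bullet$. Here I would argue as follows: $\HH_0(F_\bullet') = R/K'$ and $\HH_1(F_\bullet') = \ker(d_2')/\im(d_3)$, and an element $f \in \ker d_2'$ means $d_2(f) = d_2'(f) + d_0(f) = d_0(f) \in Re_0$; exactness of $F_\bullet$ forces $d_0(f)$ to lie in $\ker(d_1|_{Re_0})$, but $d_1|_{Re_0}$ is injective (since $K_0$ is generated by the single element $d_1(e_0)$ which is a nonzerodivisor on $R$, $R$ being a domain, assuming $d_1(e_0)\neq 0$), so $d_0(f) = 0$, hence $d_2(f) = d_2'(f) = 0$ and thus $f \in \ker d_2 = \im d_3$. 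So $\HH_1(F_\bullet') = 0$. Similarly, for $k \geq 2$, $\HH_k(F_\bullet') = \HH_k(F_\bullet) = 0$ because in degrees $\geq 2$ the differentials of $F_\bullet'$ and $F_\bullet$ agree and $\ker d_2 = \ker d_2'$ by the argument just given. Thus $F_\bullet'$ is a resolution of $R/K'$. Feeding $\HH_*(F_\bullet') = (R/K', 0, 0, \dots)$ and $\HH_*(\text{bottom}) = (R/\mfa, 0, 0, \dots)$ into the mapping cone long exact sequence, everything above degree $1$ vanishes immediately, and the only thing left to check is exactness at the bottom: the connecting homomorphism $\HH_1(\text{bottom shifted}) \to \HH_0(F_\bullet')$, i.e. the map computing whether $\HH_0(C) $ and $\HH_1(C)$ come out right. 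Unwinding, the connecting map is (induced by) multiplication giving $R/\mfa \to R/K'$, $\bar 1 \mapsto \overline{d_1(e_0)}$ up to sign; its kernel is $(K' :_R d_1(e_0))/\mfa$, which is zero by Proposition \ref{colon} (since $(K':K_0) = (K':d_1(e_0))$ and this is contained in $\mfa$), giving $\HH_1(C) = 0$; and its cokernel is $R/(K' + d_1(e_0)\mfa) = R/J$, confirming $\HH_0(C) = R/J$ and completing the proof. I should double-check the edge case $d_1(e_0) = 0$ (i.e. $e_0$ maps into the relations), but then $J = K'$, $K_0 = 0$, and the statement degenerates appropriately; I would handle it in a remark or by noting $e_0$ can be assumed part of a minimal generating set so that $d_1(e_0) \neq 0$ when $R$ is local/graded and $I \neq R$.
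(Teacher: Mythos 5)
Your route is the same as the paper's: verify the squares commute, form the mapping cone, and run the long exact sequence of the cone against the homology of the two rows. The substantive ingredients are all present and correct: that $\ker d_2'=\ker d_2$ (because $f\in\ker d_2'$ gives $d_2(f)=d_0(f)\in Re_0\cap\ker$-of-the-augmented-complex, and $d_1|_{Re_0}$ is injective since $R$ is a domain), so the top row remains acyclic in positive degrees even after replacing $d_2$ by $d_2'$ — this is exactly what the paper extracts from its $3\times 3$ diagram whose top row is $0\to Re_0\to K_0\to 0$; that the bottom row is acyclic in positive degrees because $d_1(e_0)$ is a nonzerodivisor; and your explicit flagging of the degenerate case $d_1(e_0)=0$ is a point the paper leaves implicit.

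However, the low-degree bookkeeping fails as written, in three related places. First, the cone has $C_1=F_1'\oplus G_1$ and $C_k=F_k\oplus G_k$ for $k\ge 2$ (this is what Corollary \ref{torrk} relies on), not $C_1=F_1'$ and $C_k=G_{k-1}\oplus F_k$; your own (correct) computation of $\HH_0(C)=R/(K'+\mfa K_0)$ already presupposes that $G_1$ sits in degree $1$. Second, $\HH_0$ of the top row is $F_1'/\im d_2'$, which is isomorphic via $d_1$ to $K'/(K'\cap K_0)$ — not $R/K'$ — and $\HH_0$ of the bottom row is $R/\mfa K_0$, not $R/\mfa$. Third, in the sequence $0\to G'_\bullet\to C_\bullet\to F'_\bullet[-1]\to 0$ the connecting map goes from the top row's homology to the bottom row's, i.e.\ it is the natural map $K'/(K'\cap K_0)\to R/\mfa K_0$ induced by $d_1$; it is not a map $R/\mfa\to R/K'$. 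As literally written, the cokernel of your map $R/\mfa\xrightarrow{\cdot d_1(e_0)}R/K'$ is $R/(K'+K_0)=R/I$ rather than $R/J$, so the final kernel/cokernel step does not verify what you claim. The correct injectivity statement is simply that $K'\cap\mfa K_0\subseteq K'\cap K_0$ (immediate from $\mfa K_0\subseteq K_0$; Proposition \ref{colon} is not actually needed at this point), and the cokernel of the correct connecting map is $R/(K'+\mfa K_0)=R/J$. All the pieces are in your write-up, but the final assembly must be redone with these identifications.
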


\begin{proof}
We first verify that the maps given in the statement of Theorem \ref{theres} form a morphism of complexes. To this end, it suffices only to show that the first square commutes. Let $f \in F_2$; moving counterclockwise around the first square, we see
\begin{align*}
    f & \mapsto - m_1 (q_1 (f)) \cdot d_1 (e_0) \\
    &= -d_0' (f) \cdot d_1 (e_0) \\
    &= -d_1 (d_0'(f) e_0) \\
    &= -d_1 (d_0 (f)) =  d_1 (d_2' (f) ) .\\
\end{align*}
Thus we have a well defined morphism of complexes. Let $q_\bullet$ denote the collection of vertical maps in \ref{comx}, $F_\bullet'$ the top row of \ref{comx}, and $G'_\bullet$ the bottom row of \ref{comx}. There is a short exact sequence of complexes:
$$0 \to G'_\bullet \to \textrm{Cone} (q_\bullet) \to F'_\bullet [-1] \to 0,$$
which induces the standard long exact sequence in homology. Using this long exact sequence of homology, $\textrm{Cone} (q_\bullet)$ will be a resolution of $R/J$ if:
\begin{enumerate}
    \item The complex $F_\bullet'$ is a resolution of $K'/(K'\cap K_0)$.
    \item The complex $G_\bullet'$ is a resolution of $R/(\mfa K_0)$.
    \item The induced map on $0$th homology
    $$\frac{K'}{K' \cap K_0} \to \frac{R}{\mfa K_0}$$
    is an injection.
\end{enumerate}
To prove $(1)$, observe that the top row of \ref{comx} appears as the bottom row in the short exact sequence of complexes
\begingroup\allowdisplaybreaks\begin{equation}\label{diag1}\xymatrix{&& 0 \ar[d] & 0 \ar[d] & \\
& 0 \ar[r] & Re_0 \ar[r]^-{d_1} \ar[d] & K_0 \ar[d] \ar[r] & 0 \\
\cdots \ar[r] & F_2 \ar@{=}[d] \ar[r]^{d_2} & F_1 \ar[d] \ar[r]^{d_1} & I \ar[d] \ar[r] & 0 \\
\cdots \ar[r] & F_2 \ar[d]  \ar[r]^{d_2'} & F_1' \ar[d] \ar[r]^{d_1} & \frac{K'}{K' \cap K_0} \ar[d] \ar[r] & 0 \\
& 0 & 0 & 0 \\}\end{equation}\endgroup
The top row of \ref{diag1} is exact since $R$ is a domain. The middle row is exact since $F_\bullet$ is a resolution of $R/I$, so the bottom row must also be exact. Notice that the rightmost column is exact since $I /K_0 = (K'+K_0)/K_0 \cong K'/ (K' \cap K_0)$.

Similarly, $(2)$ holds because $R$ is a domain. More precisely, if $g\in G$ and $m_1( g) \cdot d_1 (e_0) = 0$, then $m_1 (g) = 0$. Since $G_\bullet$ is exact by assumption, $g \in \im(d_2)$.

Lastly, to prove $(3)$, simply observe that $K' \cap \mfa K_0 \subseteq K' \cap K_0$. 
\end{proof}

\begin{definition}\label{def:trimcx}
The \emph{trimming complex} associated to the data of Setup \ref{setup1} is the resolution of Theorem \ref{theres}.
\end{definition}

\begin{remark}
Notice that in Definition \ref{def:trimcx}, the associated trimming complex depends on a chosen generating set for $I$, not just the ideal itself.
\end{remark}

In general, the trimming complex associated to the data of Setup \ref{setup1} need not be minimal. However, the following Corollary allows us to deduce the (graded) Betti numbers even for a nonminimal resolution.

\begin{cor}\label{torrk}
Adopt notation and hypotheses of Setup \ref{setup1}. Assume furthermore that the resolutions $F_\bullet$ and $G_\bullet$ are minimal. Then for $i \geq 2$,
$$\dim_k \tor_i^R (R/J , k) = \rank F_i + \rank G_i - \rank (q_{i-1} \otimes k) - \rank (q_i \otimes k),$$
and
$$\mu (J) = \mu(I) + \mu(\mfa) - 1 -\rank (q_1 \otimes k).$$ 
\end{cor}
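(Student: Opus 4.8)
The plan is to read off $\tor^R_*(R/J,k)$ directly from the trimming complex, which by Theorem \ref{theres} is the free resolution $\cone(q_\bullet)$ of $R/J$, via $\tor^R_i(R/J,k)=\HH_i\!\bigl(\cone(q_\bullet)\otimes_R k\bigr)$; the key point is that although $\cone(q_\bullet)$ need not be minimal, the only blocks of its differential that survive reduction modulo $\m$ are the connecting maps $q_j$. First I would record the shape of the cone: unwinding the indexing in diagram \ref{comx} gives $\cone(q_\bullet)_0=R$, $\cone(q_\bullet)_1=F_1'\oplus G_1$, and $\cone(q_\bullet)_i=F_i\oplus G_i$ for $i\geq 2$, with differential $\cone(q_\bullet)_i\to\cone(q_\bullet)_{i-1}$ equal to $\left(\begin{smallmatrix} d_i & 0\\ q_{i-1} & m_i\end{smallmatrix}\right)$ for $i\geq 3$, to $\left(\begin{smallmatrix} d_2' & 0\\ q_1 & m_2\end{smallmatrix}\right)$ for $i=2$, and to $\bigl(d_1\ \ -m_1(-)\cdot d_1(e_0)\bigr)$ for $i=1$ (signs on the $q_j$ are immaterial to what follows).

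Next I would apply $-\otimes_R k$ and check that every block but the $q_j$'s dies. Minimality of $F_\bullet$ and $G_\bullet$ gives $d_i\otimes k=0$ and $m_i\otimes k=0$ for all $i$; since $d_2'$ is the $F_1'$-component of $d_2$ under the splitting $F_1=F_1'\oplus Re_0$, also $d_2'\otimes k=0$; and the map $g\mapsto -m_1(g)\cdot d_1(e_0)$ has image contained in $\mfa K_0\subseteq\m^2$, because $m_1(G_1)=\mfa\subseteq\m$ and $d_1(e_0)\in K_0\subseteq\m$, so it too vanishes after $\otimes_R k$. Hence the induced differential $\overline{\partial}_i$ on $\cone(q_\bullet)\otimes_R k$ is the single block $q_{i-1}\otimes k\colon F_i\otimes_R k\to G_{i-1}\otimes_R k$ for $i\geq 2$, while $\overline{\partial}_1=0$.

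Finally I would compute homology. For $i\geq 2$, $\ker\overline{\partial}_i=\ker(q_{i-1}\otimes k)\oplus(G_i\otimes_R k)$ has dimension $\rank F_i-\rank(q_{i-1}\otimes k)+\rank G_i$, and $\im\overline{\partial}_{i+1}=\im(q_i\otimes k)\subseteq\ker\overline{\partial}_i$ has dimension $\rank(q_i\otimes k)$; subtracting gives the first identity. For $i=1$, $\overline{\partial}_1=0$ forces $\ker\overline{\partial}_1=(F_1'\oplus G_1)\otimes_R k$, of dimension $(\rank F_1-1)+\rank G_1=(\mu(I)-1)+\mu(\mfa)$ by minimality of $F_\bullet$ and $G_\bullet$, and $\im\overline{\partial}_2=\im(q_1\otimes k)$ has dimension $\rank(q_1\otimes k)$; since $\dim_k\tor^R_1(R/J,k)=\mu(J)$, this gives the second identity. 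I do not expect a genuine obstacle here: the argument is bookkeeping with the mapping-cone indexing — that indexing is exactly what makes $i=1$ exceptional, as $\cone(q_\bullet)_1$ involves $F_1'$ rather than $F_1$ and there is no map ``$q_0$'' — together with the one substantive observation that the twisted augmentation already lands in $\m^2$ and so contributes nothing after reduction modulo $\m$.
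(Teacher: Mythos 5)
Your proof is correct and follows essentially the same route as the paper: resolve $R/J$ by the mapping cone of Theorem \ref{theres}, note that after tensoring with $k$ only the $q_j$ blocks of the differential survive by minimality of $F_\bullet$ and $G_\bullet$, and read off the homology. You simply make explicit some points the paper leaves implicit, such as the block structure of the cone's differential and the observation that the twisted augmentation $g\mapsto -m_1(g)\cdot d_1(e_0)$ lands in $\mfa K_0\subseteq\m^2$.
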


\begin{proof}
Resolve $R/J$ by the mapping cone of the diagram in Theorem \ref{theres}, and let $\ell_i$ denote the $i$th differential. Then for $i\geq 2$,
$$\dim_k \tor_i^R (R/J , k) = \dim_k \ker (\ell_i\otimes k )/\im (\ell_{i+1} \otimes k).$$
Since the resolutions $F_\bullet$ and $G_\bullet$ are minimal by assumption,
$$\rank (\im (\ell_{i+1}) = \rank (q_{i} \otimes k), \ \textrm{and} $$
$$\quad \rank (\ker (\ell_i \otimes k)) = \rank F_i + \rank G_i - \rank (q_{i-1} \otimes k).$$
For the latter claim, observe that $\ell_1 \otimes k = 0$, so
$$\dim_k \tor_1 (R/J , k ) = \rank F_1' + \rank G_1 - \rank (q_1 \otimes k).$$
Since $\rank F_1' = \mu (I) - 1$ and $\rank G_1 = \mu (\mfa)$, the claim follows after recalling $\dim_k \tor_1 (R/J , k ) = \mu (J)$.
\end{proof}

\begin{remark}\label{rk:gradedBnos}
Observe that in the setting of Corollary \ref{torrk}, if the resolutions $F_\bullet$ and $G_\bullet$ are also graded, then we may restrict the equalities to homogeneous pieces to find the graded Betti numbers as well.
\end{remark}

\section{Iterated Trimming Complexes}\label{ittrimcx}

In this section, we consider an iterated version of the data of Setup \ref{setup1}, and construct a similar resolution. We conclude this section with a concrete example illustrating the construction.

\begin{setup}\label{setup4}
Let $R=k[x_1,\dots, x_n]$ be a standard graded polynomial ring over a field $k$. Let $I \subseteq R$ be a homogeneous ideal and $(F_\bullet, d_\bullet)$ denote a homogeneous free resolution of $R/I$. 

Write $F_1 = F_1' \oplus \Big( \bigoplus_{i=1}^t Re_0^i \Big)$, where, for each $i=1, \dotsc , t$, $e^i_0$ generates a free direct summand of $F_1$. Using the isomorphism
$$\hom_R (F_2 , F_1 ) = \hom_R (F_2,F_1') \oplus \Big( \bigoplus_{i=1}^t \hom_R (F_2 , Re^i_0) \Big)$$
write $d_2 = d_2' + d_0^1 + \cdots + d^t_0$, where $d_2' \in \hom_R (F_2,F_1')$ and $d^i_0 \in \hom_R (F_2 , Re^i_0)$. 

For each $i=1, \dotsc , t$, let $\mfa_i$ denote any homogeneous ideal with
$$d^i_0 (F_2) \subseteq \mfa_i e^i_0,$$
and $(G^i_\bullet , m^i_\bullet)$ be a homogeneous free resolution of $R/\mfa_i$. 

Use the notation $K' := \im (d_1|_{F_1'} : F_1' \to R)$, $K^i_0 := \im (d_1|_{Re^i_0} : Re^i_0 \to R)$, and let $J := K' + \mfa_1 \cdot K^1_0+ \cdots + \mfa_t \cdot K_0^t$.
\end{setup}

The next few Propositions are directly analogous to those of the previous section; the proofs are omitted since they are identical.

\begin{prop}\label{it1stmap}
Adopt notation and hypotheses of Setup \ref{setup4}. Then for each $i=1,\dotsc, t$ there exist maps $q^i_1 : F_2 \to G^i_1$ such that the following diagram commutes:
$$\xymatrix{& F_2 \ar[dl]_-{q^i_1} \ar[d]^{{d^i_0}'} \\
G_1 \ar[r]_{m^i_1} & \mfa_i, \\}$$
where ${d^i_0}' : F_2 \to R$ is the composition
$$\xymatrix{F_2 \ar[r]^{d^i_0} & Re^i_0 \ar[r] & R\\},$$
and where the second map sends $e^i_0 \mapsto 1$. 
\end{prop}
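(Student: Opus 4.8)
The plan is to reproduce, independently in each coordinate $i$, the one-line argument already used for Proposition \ref{1stmap}. First I would fix an index $i \in \{1,\dots,t\}$ and observe that the composite ${d^i_0}' \colon F_2 \to R$ in fact factors through the ideal $\mfa_i$: by the defining property $d^i_0(F_2) \subseteq \mfa_i e^i_0$ from Setup \ref{setup4}, and since the isomorphism $Re^i_0 \cong R$ sending $e^i_0 \mapsto 1$ carries $\mfa_i e^i_0$ onto $\mfa_i$, we get $\im({d^i_0}') \subseteq \mfa_i$.

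Next I would note that $m^i_1 \colon G^i_1 \to R$ has image exactly $\mfa_i$, because $(G^i_\bullet, m^i_\bullet)$ is a free resolution of $R/\mfa_i$ with $G^i_0 = R$; hence, regarded as a map onto $\mfa_i$, it is surjective. Since $F_2$ is free, in particular projective, the surjection $m^i_1 \colon G^i_1 \onto \mfa_i$ together with the map ${d^i_0}' \colon F_2 \to \mfa_i$ admits a lift $q^i_1 \colon F_2 \to G^i_1$ satisfying $m^i_1 \circ q^i_1 = {d^i_0}'$, which is precisely the asserted commutativity. Carrying this out separately for each $i = 1,\dots,t$ yields the desired family $\{q^i_1\}_{i=1}^t$.

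There is essentially no obstacle: the statement and its proof are coordinatewise copies of Proposition \ref{1stmap}. The only point worth recording explicitly is that the choices for distinct indices $i$ are entirely independent of one another, so no compatibility among the various $q^i_1$ needs to be arranged — and this independence is exactly what permits the subsequent iterated mapping-cone construction to be assembled one summand $Re^i_0$ at a time.
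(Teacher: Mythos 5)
Your argument is correct and is exactly the one the paper intends: the paper proves Proposition \ref{1stmap} by noting that $F_2$ is projective and states that the iterated version follows identically, which is precisely your coordinatewise lifting of ${d^i_0}'$ through the surjection $m^i_1 \colon G^i_1 \onto \mfa_i$. No further comment is needed.
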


\begin{prop}\label{ittheyexist}
Adopt notation and hypotheses as in Setup \ref{setup4}. Then for each $i=1, \dots , t$ there exist maps $q^i_k : F_{k+1} \to G^i_{k}$ for all $k \geq 2$ such that the following diagram commutes:
$$\xymatrix{F_{k+1} \ar[d]_{q^i_k} \ar[r]^-{d_{k+1}} & F_k \ar[d]^{q^i_{k-1}} \\
G^i_k \ar[r]_{m^i_k} & G^i_{k-1} \\}$$
\end{prop}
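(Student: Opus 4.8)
The plan is to mimic the inductive argument of Proposition \ref{theyexist} verbatim, one index $i$ at a time. Fix $i \in \{1,\dots,t\}$; having produced $q^i_1$ from Proposition \ref{it1stmap}, we construct the $q^i_k$ for $k \ge 2$ by induction on $k$. For the base case $k=2$, I would first observe that the composite ${d^i_0}' \circ d_3 : F_4 \to R$ vanishes, since ${d^i_0}'$ is (a summand of) $d_2$ composed with the projection $F_1 \to Re^i_0 \cong R$ and $d_2 \circ d_3 = 0$; more carefully, ${d^i_0}' \circ d_3$ is the $Re^i_0$-component of $d_2 \circ d_3 = 0$, hence zero. Therefore $m^i_1 \circ q^i_1 \circ d_3 = {d^i_0}' \circ d_3 = 0$, so $q^i_1 \circ d_3$ lands in $\ker m^i_1 = \im m^i_2$ by exactness of $G^i_\bullet$; since $F_3$ is projective (free), the surjection $m^i_2 : G^i_2 \onto \im m^i_2$ lifts $q^i_1 \circ d_3$ to a map $q^i_2 : F_3 \to G^i_2$ with $m^i_2 \circ q^i_2 = q^i_1 \circ d_3$, which is exactly commutativity of the square for $k=2$.

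For the inductive step, suppose $q^i_{k-1}$ has been constructed so that $m^i_{k-1} \circ q^i_{k-1} = q^i_{k-2} \circ d_k$ (where for $k=3$ we interpret this as the $k=2$ square just established, and $q^i_0$ is read off from Proposition \ref{it1stmap}). Then
$$m^i_{k-1} \circ (q^i_{k-1} \circ d_{k+1}) = q^i_{k-2} \circ d_k \circ d_{k+1} = 0,$$
so $q^i_{k-1} \circ d_{k+1}$ maps into $\ker m^i_{k-1} = \im m^i_k$ by exactness of $G^i_\bullet$. Freeness of $F_{k+1}$ again yields a lift $q^i_k : F_{k+1} \to G^i_k$ with $m^i_k \circ q^i_k = q^i_{k-1} \circ d_{k+1}$, completing the induction. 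Running this for each $i=1,\dots,t$ independently gives the full family of maps.

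There is essentially no obstacle here: the distinct summands $e^i_0$ never interact in this argument, so the single-generator proof of Proposition \ref{theyexist} applies to each $i$ without modification. The only point requiring a word of care is the very first vanishing ${d^i_0}' \circ d_3 = 0$, which is why I would spell out that $d^i_0$ is a direct summand of $d_2$ under the given decomposition $\hom_R(F_2,F_1) = \hom_R(F_2,F_1') \oplus \bigl(\bigoplus_j \hom_R(F_2,Re^j_0)\bigr)$, so that $d^i_0 \circ d_3$ is the corresponding summand of $d_2 \circ d_3 = 0$ and hence zero. Everything else is a routine diagram chase of exactly the kind already carried out in Section \ref{trimcx}, which is presumably why the paper states the proof is omitted as identical.
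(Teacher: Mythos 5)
Your proof is correct and takes exactly the paper's approach: the paper omits the proof of this proposition, stating it is identical to that of Proposition \ref{theyexist}, which is precisely the inductive lifting argument you give (vanishing of ${d^i_0}'\circ d_3$ because $d^i_0$ is a summand of $d_2$, then exactness of $G^i_\bullet$ plus freeness of $F_{k+1}$), run independently for each $i$. The only blemish is a typo: the composite ${d^i_0}'\circ d_3$ has domain $F_3$, not $F_4$.
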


\begin{theorem}\label{itres}
Adopt notation and hypotheses as in Setup \ref{setup4}. Then the mapping cone of the morphism of complexes
\begin{equation}\label{itcomx}
\xymatrix{\cdots \ar[r]^{d_{k+1}} &  F_{k} \ar[ddd]^{\begin{pmatrix} q_{k-1}^1 \\
\vdots \\
q_{k-1}^t \\
\end{pmatrix}}\ar[r]^{d_{k}} & \cdots \ar[r]^{d_3} & F_2 \ar[rrrr]^{d_2'} \ar[ddd]^{\begin{pmatrix} q_1^1 \\
\vdots \\
q_1^t \\
\end{pmatrix}} &&&& F_1' \ar[ddd]^{d_1} \\
&&&&&&& \\
&&&&&&& \\
\cdots \ar[r]^-{\bigoplus m^i_k} & \bigoplus_{i=1}^t G^i_{k-1} \ar[r]^-{\bigoplus m^i_{k-1}} & \cdots \ar[r]^-{\bigoplus m^i_2} & \bigoplus_{i=1}^t G^i_1 \ar[rrrr]^-{-\sum_{i=1}^t m^i_1(-)\cdot d_1(e^i_0)} &&&& R \\}\end{equation}
is a free resolution of $R/J$.
\end{theorem}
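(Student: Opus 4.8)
The plan is to mirror the argument of Theorem \ref{theres}, carrying through the extra bookkeeping introduced by the $t$ summands. First I would verify that the array in \eqref{itcomx} really is a morphism of complexes. As before, the only nontrivial commutativity is the leftmost square; for $f \in F_2$, chasing counterclockwise gives $-\sum_{i=1}^t m^i_1(q^i_1(f))\cdot d_1(e^i_0) = -\sum_{i=1}^t {d^i_0}'(f)\cdot d_1(e^i_0) = -\sum_{i=1}^t d_1\big({d^i_0}'(f)e^i_0\big) = -d_1\big(\sum_{i=1}^t d^i_0(f)\big) = d_1(d_2'(f))$, using $d_2 = d_2' + \sum_i d^i_0$ and $d_1 \circ d_2 = 0$. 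The commutativity of the remaining squares is immediate from Propositions \ref{it1stmap} and \ref{ittheyexist}.

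Next, let $q_\bullet$ be the column of vertical maps, $F_\bullet'$ the top row, $G_\bullet'$ the bottom row of \eqref{itcomx}. The short exact sequence of complexes $0 \to G_\bullet' \to \cone(q_\bullet) \to F_\bullet'[-1] \to 0$ reduces the claim, via the long exact homology sequence, to three points exactly as before: (1) $F_\bullet'$ resolves $K'/(K'\cap(K_0^1+\cdots+K_0^t))$; (2) $G_\bullet'$ resolves $R/(\mfa_1 K_0^1 + \cdots + \mfa_t K_0^t)$; (3) the induced map on $\HH_0$ is injective. For (1), the top row of \eqref{itcomx} sits as the bottom row of a short exact sequence of complexes analogous to \eqref{diag1}, now with the subcomplex in the top position being $0 \to \bigoplus_{i=1}^t Re_0^i \xra{d_1} K_0^1+\cdots+K_0^t \to 0$; this top row is exact because $R$ is a domain (so $d_1|_{Re_0^i}$ is injective) and because the $Re_0^i$ are independent summands, and the rightmost column is exact since $I/(K_0^1+\cdots+K_0^t) \cong K'/(K'\cap(K_0^1+\cdots+K_0^t))$. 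The middle row is the resolution $F_\bullet$ of $R/I$, hence the bottom row $F_\bullet'$ is exact. For (2), if $g = (g_1,\dots,g_t) \in \bigoplus_i G^i_1$ with $\sum_i m^i_1(g_i)\cdot d_1(e_0^i) = 0$, I need to conclude $g \in \im(\bigoplus m^i_2)$; this is the point requiring the most care, since unlike the single-generator case one cannot immediately split off each coordinate. For (3), I would note $K' \cap (\mfa_1 K_0^1 + \cdots + \mfa_t K_0^t) \subseteq K' \cap (K_0^1 + \cdots + K_0^t)$, which gives the needed injectivity.

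The main obstacle is part (2): establishing that the bottom row $G_\bullet'$ is a resolution of $R/(\sum_i \mfa_i K_0^i)$. In the single-summand proof this was a one-line consequence of $R$ being a domain, but with $t>1$ the augmentation map $\bigoplus_i G^i_1 \to R$ sends $(g_i) \mapsto \sum_i m^i_1(g_i)\cdot d_1(e_0^i)$, and its kernel is no longer simply the direct sum of the kernels of the individual $m^i_1$. I expect the resolution of this to require an analogue of Proposition \ref{colon} — namely a colon-ideal computation showing $\big((\sum_{j\neq i}\mfa_j K_0^j + K') : K_0^i\big) \subseteq \mfa_i$, or more precisely that the $d_1(e_0^i)$ are sufficiently independent modulo the relevant submodules — together possibly with a further hypothesis (for instance, that the $K_0^i$ generate independent rank-one free submodules, which holds in the domain setting when the $d_1(e_0^i)$ form part of a regular sequence or are otherwise generic) or an induction on $t$ peeling off one summand at a time and reapplying the single-variable Theorem \ref{theres}. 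Once (2) is in hand, the assembly of the long exact sequence and the identification of $\HH_0(\cone(q_\bullet))$ with $R/J$ proceeds verbatim as in Theorem \ref{theres}.
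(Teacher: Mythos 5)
Your opening verification that \eqref{itcomx} is a morphism of complexes is fine, and so is point (3), but the three-condition strategy you import from Theorem \ref{theres} breaks down for $t\geq 2$ because \emph{both} rows of \eqref{itcomx} fail to be exact in positive degrees, not just the bottom one. For (1), your claimed exactness of the subcomplex $0 \to \bigoplus_{i=1}^t Re_0^i \xra{d_1} K_0^1+\cdots+K_0^t \to 0$ is false: the kernel of $(r_1,\dots,r_t)\mapsto \sum_i r_i d_1(e_0^i)$ is the syzygy module of the elements $d_1(e_0^1),\dots,d_1(e_0^t)$, which contains the Koszul syzygies $d_1(e_0^j)e_0^i - d_1(e_0^i)e_0^j$ and is therefore nonzero whenever $t\geq 2$; being ``independent summands'' of $F_1$ says nothing about $R$-linear independence of the images. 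Consequently $\HH_1(F_\bullet')$ is generally nonzero. For (2), the point you flag as delicate is in fact false rather than merely hard: if, say, $d_1(e_0^2)\in\mfa_1$ and $d_1(e_0^1)\in\mfa_2$ (as happens already in the paper's pfaffian example), one gets classes $(g_1,-g_2,0,\dots,0)$ in the kernel of the augmentation with $m_1^1(g_1)\neq 0$, which do not lie in $\bigoplus_i \im(m_2^i)$, so $\HH_1(G_\bullet')\neq 0$. The long exact sequence of $0\to G_\bullet'\to \operatorname{Cone}(q_\bullet)\to F_\bullet'[-1]\to 0$ then shows that acyclicity of the cone is equivalent to the connecting maps $\HH_j(F_\bullet')\to \HH_j(G_\bullet')$ being isomorphisms for all $j\geq 1$ (plus injectivity on $\HH_0$); proving that directly is essentially as hard as the theorem itself.

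The fallback you mention in passing---induction on $t$---is the paper's actual proof, and it is where the real work lies. One applies Theorem \ref{theres} to trim only the summand $Re_0^1$, keeping $Re_0^2,\dots,Re_0^t$ as split summands of the new degree-one term ${F_1^1}' = F_1'\oplus\bigl(\bigoplus_{i=2}^t Re_0^i\bigr)$; the resulting mapping cone $F_\bullet^1$ resolves $R/(K'+K_0^2+\cdots+K_0^t+\mfa_1K_0^1)$ and, after decomposing its second differential, is again in the situation of Setup \ref{setup4} with $t-1$ summands. One then checks that the maps $\begin{pmatrix} q_j^i & 0\end{pmatrix}\colon F_{j+1}\oplus G_{j+1}^1 \to G_j^i$ satisfy the commutativity required by Proposition \ref{ittheyexist}, invokes the inductive hypothesis, and verifies that the iterated mapping cone has the same differentials as the cone of \eqref{itcomx}. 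You should restructure the argument around this induction and supply that bookkeeping; as written, your primary route cannot be repaired without proving the homology-matching statement above.
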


The proof of Theorem \ref{itres} follows from iterating the construction of Theorem \ref{theres}; however, there is some careful bookkeeping needed to deduce that the mapping cone of \ref{itcomx} can be obtained by iterating the mapping cone construction of Theorem \ref{theres}. 



\begin{proof}[Proof of Theorem \ref{itres}]
Adopt notation and hypotheses of Setup \ref{setup4}. Let $(F_\bullet^1,d^1_\bullet)$ denote the complex of Theorem \ref{theres} applied to the direct summand $Re_0^1$ of $F_1$; that is, the mapping cone of:
\begin{equation}
\xymatrix{\cdots \ar[r]^{d_{k+1}^1} &  F_{k} \ar[d]^{q_{k-1}^1}\ar[r]^{d_{k}^1} & \cdots \ar[r]^{d_3^1} & F_2 \ar[rr]^{{d_2^1}'} \ar[d]^{q_1^1} && {F_1^1}' \ar[d]^{d^1_1} \\
\cdots \ar[r]^{m_k^1} &G_{k-1}^1 \ar[r]^{m_{k-1}^1} & \cdots \ar[r]^{m_2^1} & G_1^1 \ar[rr]^-{-m^1_1(-)\cdot d_1(e_0)} && R, \\}\end{equation}
where ${F_1^1}' = F_1' \oplus \Big( \bigoplus_{i=2}^t Re_0^i \Big)$ and ${d_2^1}' = d_2' + d_0^2 + \cdots + d_0^t$. Proceed by induction on $t$. Observe that Theorem \ref{theres} is the base case $t=1$. Let $t>1$ and recall the notation of Setup \ref{setup4}. We may write
$$d_2^1 = \begin{pmatrix} d_2' & 0 \\
-q_1^1 & m_2^1 \\
\end{pmatrix} + \begin{pmatrix} d_0^2 & 0 \\
0 & 0 \\
\end{pmatrix} + \cdots + \begin{pmatrix} d_0^t & 0 \\
0 & 0 \\
\end{pmatrix}$$
where for each $i=2,\dots , t$,
$$\begin{pmatrix} d_0^i & 0 \\
0 & 0 \\
\end{pmatrix} : F_2^1 \to Re_0^i.$$
This means we are in the situation of Setup \ref{setup4}, only instead trimming $t-1$ generators from the ideal $K'+\mfa_1 K_0^1 + K_0^2 + \cdots + K_0^t$. Observe that the maps $\begin{pmatrix} q^{i}_j & 0 \end{pmatrix} : F_{j+1}^1 = F_{j+1} \oplus G_{j+1}^1 \to G_j^i$ make the diagram of Proposition \ref{ittheyexist} commute. By induction, the mapping cone of
$$\xymatrix{\cdots \ar[r]^{d^1_{k+1}} &  F^1_{k} \ar[ddd]^{\begin{pmatrix} q_{k-1}^2 & 0 \\
\vdots \\
q_{k-1}^t & 0 \\
\end{pmatrix}}\ar[r]^{d^1_{k}} & \cdots \ar[r]^{d^1_3} & F^1_2 \ar[rrrr]^{\begin{pmatrix} d_2' & 0 \\
-q_1^1 & m_2^1 \\
\end{pmatrix}} \ar[ddd]^{\begin{pmatrix} q_1^2 & 0 \\
\vdots \\
q_1^t & 0 \\
\end{pmatrix}} &&&& F_1' \ar[ddd]^{d_1-m^1_1(-)\cdot d_1(e^1_0)} \\
&&&&&&& \\
&&&&&&& \\
\cdots \ar[r]^-{\bigoplus m^i_k} & \bigoplus_{i=2}^t G^i_{k-1} \ar[r]^-{\bigoplus m^i_{k-1}} & \cdots \ar[r]^-{\bigoplus m^i_2} & \bigoplus_{i=2}^t G^i_1 \ar[rrrr]^-{-\sum_{i=2}^t m^i_1(-)\cdot d_1(e^i_0)} &&&& R \\}$$
forms a resolution of $K'+ \mfa_1 K_0^1 + \big( \mfa_2 K_0^2 + \cdots + \mfa_t K_0^t \big)$ (recall that the top row forms a resolution of $K'+ (K_0^2 + \cdots + K_0^t) + \mfa_1 K_0^1$ by Theorem \ref{theres}). The differentials of this mapping cone are the same as the differentials induced by the mapping cone of diagram \ref{itcomx} as in the statement of Theorem \ref{itres}.
\end{proof}

\begin{definition}\label{def:ittrimcx}
The \emph{iterated trimming complex} associated to the data of Setup \ref{setup4} is the complex of Theorem \ref{itres}.
\end{definition}

As an immediate consequence, one obtains the following result (the proof of which is identical to that of Corollary \ref{torrk}):

\begin{cor}\label{ittorrk}
Adopt notation and hypotheses of Setup \ref{setup4}. Assume furthermore that the complexes $F_\bullet$ and $G_\bullet$ are minimal. Then for $i \geq 2$,
$$\dim_k \tor_i^R (R/J , k) = \rank F_i + \sum_{j=1}^t  \rank G^j_i - \rank \Bigg( \begin{pmatrix} q_i^1 \\
\vdots \\
q_i^t \\
\end{pmatrix} \otimes k\Bigg) - \rank \Bigg( \begin{pmatrix} q_{i-1}^1 \\
\vdots \\
q_{i-1}^t \\
\end{pmatrix} \otimes k\Bigg),$$
and
$$\mu (J) = \mu(I) -t+ \sum_{j=1}^t \mu(\mfa_j) -\rank \Bigg( \begin{pmatrix} q_1^1 \\
\vdots \\
q_1^t \\
\end{pmatrix} \otimes k\Bigg) .\qquad \qquad \square $$ 
\end{cor}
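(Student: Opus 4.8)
The plan is to mimic, almost verbatim, the proof of Corollary~\ref{torrk}, since Theorem~\ref{itres} provides the iterated analogue of the resolution in Theorem~\ref{theres} and the combinatorics of ranks behaves additively over the $t$ summands. First I would resolve $R/J$ by the mapping cone of diagram~\ref{itcomx}, writing $\ell_\bullet$ for its differentials. By construction $\ell_i$ decomposes into a block-matrix whose off-diagonal blocks are $\bigl(\begin{smallmatrix} q_{i-1}^1 \\ \vdots \\ q_{i-1}^t\end{smallmatrix}\bigr)$, whose lower-triangular part is $\bigoplus_j m_i^j$, and whose upper-left part is the differential $d_i$ of $F_\bullet$ (with the $d_2'$ adjustment and the $-\sum m_1^i(-)\cdot d_1(e_0^i)$ adjustment in the bottom degrees). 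The point is that after applying $-\otimes k$, both $d_i\otimes k$ and each $m_i^j\otimes k$ vanish for all relevant $i$ because $F_\bullet$ and $G_\bullet^j$ are minimal, and the very bottom map $-\sum m_1^i(-)\cdot d_1(e_0^i)$ also vanishes mod $\m$; so $\ell_i\otimes k$ is, up to sign, just the block $\bigl(\begin{smallmatrix} q_{i-1}^1 \\ \vdots \\ q_{i-1}^t\end{smallmatrix}\bigr)\otimes k$ sitting inside a larger zero matrix.

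Next I would compute Tor. For $i\geq 2$ we have $\dim_k\tor_i^R(R/J,k)=\dim_k\bigl(\ker(\ell_i\otimes k)/\im(\ell_{i+1}\otimes k)\bigr)$. Since the only nonzero part of $\ell_i\otimes k$ is $\bigl(\begin{smallmatrix} q_{i-1}^1 \\ \vdots \\ q_{i-1}^t\end{smallmatrix}\bigr)\otimes k\colon F_i\otimes k\to\bigoplus_{j=1}^t G_{i-1}^j\otimes k$, the kernel has dimension $\rank F_i+\sum_{j=1}^t\rank G_i^j-\rank\bigl(\bigl(\begin{smallmatrix} q_{i-1}^1 \\ \vdots \\ q_{i-1}^t\end{smallmatrix}\bigr)\otimes k\bigr)$, and the image $\im(\ell_{i+1}\otimes k)$ has dimension $\rank\bigl(\bigl(\begin{smallmatrix} q_i^1 \\ \vdots \\ q_i^t\end{smallmatrix}\bigr)\otimes k\bigr)$; subtracting gives the first formula. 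For the generator count, $\ell_1\otimes k=0$, so $\mu(J)=\dim_k\tor_1^R(R/J,k)=\rank F_1'+\sum_{j=1}^t\rank G_1^j-\rank\bigl(\bigl(\begin{smallmatrix} q_1^1 \\ \vdots \\ q_1^t\end{smallmatrix}\bigr)\otimes k\bigr)$, and since $\rank F_1'=\mu(I)-t$ and $\rank G_1^j=\mu(\mfa_j)$, the second formula drops out.

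The only genuine subtlety — and the step I expect to be the main obstacle — is justifying that the differentials of the mapping cone of~\ref{itcomx} really do have the clean block structure described above, in particular that the terms $G_i^j$ for different $j$ do not interact and that no extra components (e.g.\ maps $G_i^j\to G_{i-1}^{j'}$ for $j\neq j'$, or $G_i^j\to F_{i-1}$) appear. This is exactly what the inductive bookkeeping in the proof of Theorem~\ref{itres} establishes: at each stage one trims a single summand via Theorem~\ref{theres}, the new $G^1$-columns are appended, and the previously constructed lift maps $q^i_\bullet$ extend by zero on those columns, so no cross terms are created. I would therefore simply cite that structure from the proof of Theorem~\ref{itres} rather than re-deriving it, and note that all its off-diagonal and $m$-type entries lie in $\m$, hence die after $-\otimes k$. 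With that in hand the rank bookkeeping is identical to Corollary~\ref{torrk}, and Remark~\ref{rk:gradedBnos} applies verbatim to give the graded refinement.

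\begin{proof}[Proof of Corollary~\ref{ittorrk}]
Resolve $R/J$ by the mapping cone of the diagram in Theorem~\ref{itres}, and let $\ell_i$ denote the $i$th differential. From the inductive description of this mapping cone given in the proof of Theorem~\ref{itres}, the term in homological degree $i$ is $F_i\oplus\bigl(\bigoplus_{j=1}^t G_i^j\bigr)$ for $i\geq 1$ (with $F_1$ replaced by $F_1'$), and the differential $\ell_i$ has $d_i$ (respectively $d_2'$ in degree $2$) as its $F$-to-$F$ component, $\bigoplus_{j=1}^t m_i^j$ as its $G$-to-$G$ component, $\bigl(\begin{smallmatrix} q_{i-1}^1 \\ \vdots \\ q_{i-1}^t\end{smallmatrix}\bigr)$ as its $F$-to-$G$ component, and no other components; in the bottom degree the $G$-to-$R$ component is $-\sum_{i=1}^t m_1^i(-)\cdot d_1(e_0^i)$. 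Since $F_\bullet$ and each $G^j_\bullet$ are minimal, every entry of $d_i$, of each $m_i^j$, and of $d_1(e_0^i)\in\m$ lies in $\m$; hence $\ell_i\otimes k$ is, up to sign, the map $\bigl(\begin{smallmatrix} q_{i-1}^1 \\ \vdots \\ q_{i-1}^t\end{smallmatrix}\bigr)\otimes k\colon F_i\otimes k\to\bigl(\bigoplus_{j=1}^t G_{i-1}^j\bigr)\otimes k$, regarded as a block of the zero matrix on the remaining summands.

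For $i\geq 2$,
$$\dim_k\tor_i^R(R/J,k)=\dim_k\ker(\ell_i\otimes k)-\dim_k\im(\ell_{i+1}\otimes k).$$
By the previous paragraph,
$$\dim_k\im(\ell_{i+1}\otimes k)=\rank\Bigg(\begin{pmatrix} q_i^1 \\ \vdots \\ q_i^t \end{pmatrix}\otimes k\Bigg),$$
$$\dim_k\ker(\ell_i\otimes k)=\rank F_i+\sum_{j=1}^t\rank G_i^j-\rank\Bigg(\begin{pmatrix} q_{i-1}^1 \\ \vdots \\ q_{i-1}^t \end{pmatrix}\otimes k\Bigg),$$
which gives the first formula.

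Finally, $\ell_1\otimes k=0$, so
$$\mu(J)=\dim_k\tor_1^R(R/J,k)=\rank F_1'+\sum_{j=1}^t\rank G_1^j-\rank\Bigg(\begin{pmatrix} q_1^1 \\ \vdots \\ q_1^t \end{pmatrix}\otimes k\Bigg).$$
Since $\rank F_1'=\mu(I)-t$ and $\rank G_1^j=\mu(\mfa_j)$, the second formula follows.
\end{proof}
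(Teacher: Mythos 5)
Your proof is correct and is exactly the argument the paper intends: it states that the proof of Corollary \ref{ittorrk} is identical to that of Corollary \ref{torrk}, and your write-up is precisely that argument transported to the mapping cone of Theorem \ref{itres}, with the block structure of $\ell_i$ and the minimality of $F_\bullet$ and the $G^j_\bullet$ doing the same work. Your extra care in verifying that no cross-components $G^j_i \to G^{j'}_{i-1}$ or $G^j_i \to F_{i-1}$ appear is a reasonable citation of the bookkeeping already done in the proof of Theorem \ref{itres}.
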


\begin{example}
Let $R = k[x,y,z]$, $$X =\begin{pmatrix}
      0&0&0&{-x^{2}}&{-z^{2}}\\
      0&0&{-x^{2}}&{-z^{2}}&{-y^{2}}\\
      0&x^{2}&0&{-y^{2}}&0\\
      x^{2}&z^{2}&y^{2}&0&0\\
      z^{2}&y^{2}&0&0&0\end{pmatrix},$$
and $I = \textrm{Pf} (X)$, the ideal of submaximal pfaffians of $X$. Let $F_\bullet$ denote the complex
$$\xymatrix{0 \ar[r] & R \ar[r]^-{d_1^*} & R^n \ar[r]^-{X} & R^n \ar[r]^{d_1} & R,}$$
with 
$$d_1 = \begin{pmatrix}
      y^{4}&{-y^{2}z^{2}}&-x^{2}y^{2}+z^{4}&{-x^{2}z^{2}}&x^{4}\end{pmatrix}.$$
      This is a minimal free resolution of $R/I$ (see \cite{buchsbaum1977algebra}). In the notation of Setup \ref{setup4}, let
      $$K' := (-x^2y^2+z^4,-x^2z^2,x^4), \ K_0^1 := (y^4), \ K_0^2 := (-y^2z^2),$$
      and $\mfa_1 = \mfa_2 := (x,y,z)$. Let $G^1_\bullet = G_\bullet^2$ denote the Koszul complex:
      $$\xymatrix{0 \ar[r] & R \ar[r]^-{\begin{pmatrix}
      z\\
      {-y}\\
      x\end{pmatrix}} \ar[r] & R^3 \ar[rrr]^-{\begin{pmatrix}
      {-y}&{-z}&0\\
      x&0&{-z}\\
      0&x&y\end{pmatrix}} &&& R^3 \ar[rrr]^-{\begin{pmatrix}
      x&y&z\end{pmatrix}} &&& R}.$$
      Then, one computes:
      $$q^1_1 = \begin{pmatrix}
      0&0&0&{-x}&0\\
      0&0&0&0&0\\
      0&0&0&0&{-z}\end{pmatrix} : R^5 \to R^3,$$
      $$q^1_2 = \begin{pmatrix}
      0\\
      {-x^{3}z}\\
      0\end{pmatrix} : R \to R^3,$$
      $$q^2_1 = \begin{pmatrix}
      0&0&{-x}&0&0\\
      0&0&0&0&{-y}\\
      0&0&0&{-z}&0\end{pmatrix}: R^5 \to R^3,$$
      $$q^2_2 = \begin{pmatrix}
      {-x^{3}y}\\
      x\,z^{3}\\
      0\end{pmatrix}: R \to R^3.$$
      Then, the mapping cone of Theorem \ref{itres} forms a resolution of $R/(K' + \mfa_1K_0^1 + \mfa_2 K_0^2)$. In particular, we deduce that this mapping cone is a minimal free resolution and hence the above quotient ring has Betti table
      $$\begin{matrix}
      &0&1&2&3\\\text{total:}&1&9&11&3\\\text{0:}&1&\text{.}&\text{.}&\text{.}\\\text{1:}&\text{.}&\
      \text{.}&\text{.}&\text{.}\\\text{2:}&\text{.}&\text{.}&\text{.}&\text{.}\\\text{3:}&\text{.}&3
      &\text{.}&\text{.}\\\text{4:}&\text{.}&6&11&2\\\text{5:}&\text{.}&\text{.}&\text{.}&\text
      {.}\\\text{6:}&\text{.}&\text{.}&\text{.}&\text{.}\\\text{7:}&\text{.}&\text{.}&\text{.}&1.\\\
      \end{matrix}$$
\end{example}

\section{Betti Tables for Ideals Obtained by Removing a Generator from Generic Submaximal Pfaffian Ideals and Ideals of Maximal Minors}\label{ressubs}

In this section, we demonstrate how to use trimming complexes to compute the Betti table of the ideal generating by removing a single generator from a given generating set of an ideal $I$.

\begin{setup}\label{setup5}
Let $R=k[x_1,\dots, x_n]$ be a standard graded polynomial ring over a field $k$, with $R_+ := R_{>0}$. Let $I \subseteq R$ be a homogeneous $R_+$-primary ideal and $(F_\bullet, d_\bullet)$ denote a homogeneous free resolution of $R/I$. 

Write $F_1 = F_1' \oplus Re_0$, where $e_0$ generates a free direct summand of $F_1$. Using the isomorphism
$$\hom_R (F_2 , F_1 ) = \hom_R (F_2,F_1') \oplus \hom_R (F_2 , Re_0)$$
write $d_2 = d_2' + d_0$, where $d_2' \in \hom_R (F_2,F_1')$, $d_0 \in \hom_R (F_2 , Re_0)$. Let $\mfa$ denote a homogeneous ideal with
$$d_0 (F_2) = \mfa e_0,$$
and $(G_\bullet , m_\bullet)$ be a homogeneous free resolution of $R/\mfa$. 

Use the notation $K' := \im (d_1|_{F_1'} : F_1' \to R)$, $K_0 := \im (d_1|_{Re_0} : Re_0 \to R)$, and let $J := K' + \mfa \cdot K_0$.
\end{setup}

\begin{prop}
Adopt notation and hypotheses as in Setup \ref{setup5}. Then the resolution of Theorem \ref{theres} resolves $K'$.
\end{prop}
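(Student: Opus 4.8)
The plan is to show that under the stronger hypothesis of Setup \ref{setup5}, namely $d_0(F_2) = \mfa e_0$ (equality, not just containment), the ideal $J = K' + \mfa K_0$ actually equals $K'$, so that the resolution of Theorem \ref{theres} is automatically a resolution of $K'$. The key observation is that equality $d_0(F_2) = \mfa e_0$ forces $\mfa K_0 \subseteq K'$.

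First I would unwind what $d_0(F_2) = \mfa e_0$ says: for every $a \in \mfa$ there exists $f \in F_2$ with $d_0(f) = a e_0$. Writing $d_2 = d_2' + d_0$ and using $d_1 \circ d_2 = 0$, we get $d_1(d_2'(f)) + d_1(d_0(f)) = 0$, i.e. $d_1(d_0(f)) = -d_1(d_2'(f)) \in d_1(F_1') = K'$. But $d_1(d_0(f)) = d_1(a e_0) = a\, d_1(e_0)$, and $d_1(e_0)$ is precisely the generator of $K_0$ (up to the identification $K_0 = \im(d_1|_{Re_0})$). Hence $a \cdot d_1(e_0) \in K'$ for every $a \in \mfa$, which is exactly the statement $\mfa K_0 \subseteq K'$. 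Therefore $J = K' + \mfa K_0 = K'$, and Theorem \ref{theres} — whose hypotheses (those of Setup \ref{setup1}, with the containment $d_0(F_2) \subseteq \mfa e_0$) are a fortiori satisfied here — produces a free resolution of $R/J = R/K'$, hence of $K'$ after the obvious degree shift. (One should phrase the conclusion consistently with the paper's convention: the mapping cone resolves $R/J$, and $J = K'$.)

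I do not expect a serious obstacle here; the content is essentially a one-line computation combined with invoking Theorem \ref{theres}. The only points requiring a modicum of care are: (i) making sure the identification $K_0 = \im(d_1|_{Re_0})$ is used correctly so that $d_1(e_0)$ generates $K_0$ and the manipulation $d_1(ae_0) = a\,d_1(e_0)$ is literally the statement $\mfa K_0 \subseteq K'$; and (ii) noting that the hypotheses of Setup \ref{setup5} include those of Setup \ref{setup1} (since equality implies the required containment $d_0(F_2) \subseteq \mfa e_0$), so Theorem \ref{theres} genuinely applies verbatim. One might also remark, for context, that this is consistent with Proposition \ref{colon}: there $(K' : K_0) \subseteq \mfa$ was shown, and now the reverse-flavored equality hypothesis upgrades this to $\mfa \subseteq (K' : K_0)$, i.e. $\mfa K_0 \subseteq K'$.

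Here is the proof I would write:

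\begin{proof}
By hypothesis $d_0(F_2) = \mfa e_0$. Fix $a \in \mfa$ and choose $f \in F_2$ with $d_0(f) = a e_0$. Since $d_1 \circ d_2 = 0$ and $d_2 = d_2' + d_0$, we have
$$a\, d_1(e_0) = d_1(a e_0) = d_1(d_0(f)) = -d_1(d_2'(f)) \in d_1(F_1') = K'.$$
As $K_0 = \im(d_1|_{Re_0}) = R\, d_1(e_0)$, this shows $\mfa K_0 \subseteq K'$, whence $J = K' + \mfa K_0 = K'$. The hypotheses of Setup \ref{setup5} include those of Setup \ref{setup1} (equality $d_0(F_2) = \mfa e_0$ implies the containment $d_0(F_2) \subseteq \mfa e_0$), so Theorem \ref{theres} applies and its mapping cone is a free resolution of $R/J = R/K'$; equivalently, it resolves $K'$.
\end{proof}
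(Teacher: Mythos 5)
Your proof is correct and follows essentially the same route as the paper's: take $a \in \mfa$, use the equality $d_0(F_2)=\mfa e_0$ to find $f$ with $d_0(f)=ae_0$, and apply $d_1\circ d_2=0$ to conclude $\mfa K_0\subseteq K'$, hence $J=K'$. The paper additionally records the full equality $\mfa = K':K_0$ by combining this with Proposition \ref{colon}, but that extra direction is not needed for the conclusion.
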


\begin{proof}
It will be shown that $\mfa = K' : K_0$. Observe that $K' : K_0 \subseteq \mfa$ by Proposition \ref{colon}. Let $r \in \mfa$; by assumption, there exists $f \in F_2$ such that $d_0 (f) = r e_0$. Since $F_\bullet$ is a complex, $d_1 ( re_0) = - d_1 (d_2' (f))$, so that $r K_0 \subseteq K'$. This yields that $\mfa = K' : K_0$. In particular, we find that $\mfa K_0 \subset K'$. The resolution of Theorem \ref{theres} resolves $K'+ \mfa K_0 = K'$, so the result follows.
\end{proof}

 \begin{notation}
 Given a skew symmetric matrix $X \in M_n (R)$, where $R$ is some commutative ring, the notation $\textrm{Pf}_j (X)$ will denote the pfaffian of the matrix obtained by removing the $j$th row and column from $X$. 
 \end{notation}

\begin{prop}\label{prop:btabforpfaffs}
Let $R=k[x_{ij} \mid 1\leq i<j \leq n]$ and let $X$ denote a generic $n\times n$ skew symmetric matrix, with $n \geq 7$ odd. Given $1 \leq i \leq n$, the ideal 
$$J:= (\textrm{Pf}_j (X) \mid i \neq j )$$
has Betti table
\begin{equation*}\begin{tabular}{C|C C C C C C C C C}
     & 0 & 1 & 2 & 3 &\cdots & k & \cdots & n-1  \\
     \hline 
   0  & 1 & \cdot & \cdot &  &\cdots & \cdot & \cdots  & \cdot \\
   
   \vdots &  &  & & & & & & \\
   
   (n-3)/2 & \cdot & n-1 & 1 & \cdot & \cdots &  & \cdots  & \cdot \\
   
   \vdots & & & & & & & & \\
   
   (n-1)/2 & \cdot & \cdot  & \binom{n-1}{2}& \binom{n-1}{3} &  \cdots & \binom{n-1}{k} & \cdots  & 1 \\
   
   \vdots & & & & & & & & \\
   
   n-3 & \cdot & \cdot & \cdot & 1 & \cdots & \cdot & \cdots & \cdot  \\
\end{tabular}
\end{equation*}
In the case where $n=5$, the Betti table is
\begin{equation*}
    \begin{tabular}{L|L L L L L}
           & 0 & 1 &2&3&4\\
           \hline
          0 & 1 &\cdot&\cdot&\cdot&\cdot \\
          1 & \cdot & 4 & 1 & \cdot &\cdot \\
          2 & \cdot & \cdot & 6 & 5 & 1 \\
    \end{tabular}
\end{equation*}
\end{prop}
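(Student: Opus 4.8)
The plan is to realize $J$ as a trimmed ideal in the sense of Setup~\ref{setup5}, apply the trimming complex of Theorem~\ref{theres}, and then read off the Betti table using Corollary~\ref{torrk} (in its graded form, Remark~\ref{rk:gradedBnos}). Concretely, take $I = \textrm{Pf}(X)$ the ideal of submaximal pfaffians, which is Gorenstein of grade $3$ with the Buchsbaum--Eisenbud resolution $F_\bullet: 0 \to R \to R^n \xrightarrow{X} R^n \to R$, where $d_1 = (\textrm{Pf}_1(X), \dots, \textrm{Pf}_n(X))$ up to sign. Split off the summand $Re_0$ of $F_1$ corresponding to the $i$th generator $\textrm{Pf}_i(X)$. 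By the self-duality of the Buchsbaum--Eisenbud complex, $d_2 = X$, and the $i$th column of $X$ is the composite $d_0$; since $X$ is generic skew symmetric, the entries of that column (the off-diagonal variables in row/column $i$) generate exactly the maximal ideal $\mfa = (x_{i\ell} \mid \ell \neq i) = R_+$ after identifying variables, so $d_0(F_2) = \mfa e_0$ with $\mfa$ the irrelevant maximal ideal in $\binom{n}{2}$ variables — wait, more precisely $\mfa$ is generated by the $n-1$ entries of the $i$th column, which are distinct variables, so $\mfa$ is generated by a regular sequence of length $n-1$ and $G_\bullet$ is the Koszul complex on those $n-1$ variables.

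The key computational step is to identify the comparison maps $q_1 : F_2 \to G_1$ and $q_k : F_{k+1} \to G_k$ and, crucially, which of their entries are units after $\otimes k$. Here $q_1$ lifts the map $d_0' : F_2 \to R$ (which picks out the $i$th column of $X$, i.e.\ sends the $\ell$th basis vector to $\pm x_{i\ell}$) through the Koszul augmentation $m_1 : G_1 \to \mfa$; since $d_0'$ already maps basis vectors to the Koszul generators, $q_1$ can be taken with unit entries on the nose — in fact $q_1 \otimes k$ has rank $n-1$ (it is essentially a permutation/inclusion matrix hitting all $n-1$ Koszul generators, except in the one spot where the column has a repeated or zero entry; genericity forces exactly $n-1$ distinct variables off the diagonal, so rank $n-1$). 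For $q_2 : F_3 = R \to G_2 = \wedge^2(R^{n-1})$, the entries are degree-$2$ forms (the example in the paper shows entries like $-x^3z$, degree $4$ — but that is in the squared-variable setting; in the generic setting the entries of $q_2$ have positive degree), hence $q_2 \otimes k = 0$. Plugging into Corollary~\ref{torrk}: $\mu(J) = \mu(I) + \mu(\mfa) - 1 - \operatorname{rank}(q_1 \otimes k) = n + (n-1) - 1 - (n-1) = n-1$, and for $i \geq 2$, $\dim_k \tor_i = \operatorname{rank} F_i + \binom{n-1}{i} - \operatorname{rank}(q_{i-1}\otimes k) - \operatorname{rank}(q_i \otimes k)$. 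Since $F_\bullet$ has length $3$ with $\operatorname{rank} F_2 = n$, $\operatorname{rank} F_3 = 1$, and $q_i \otimes k = 0$ for $i \geq 2$ while $q_1 \otimes k$ has rank $n-1$, we get $\dim_k \tor_2 = n + \binom{n-1}{2} - (n-1) - 0 = 1 + \binom{n-1}{2}$, $\dim_k \tor_3 = 1 + \binom{n-1}{3}$, and $\dim_k \tor_k = \binom{n-1}{k}$ for $4 \leq k \leq n-1$ (pure Koszul contributions). The internal degrees: the Koszul complex $G_\bullet$ contributes a strand starting in degree $(n-1)/2$ (the generators of $J$ that are $\mfa \cdot \textrm{Pf}_i$ live in degree $(n-1)/2 + 1$... one must track that $\textrm{Pf}_i$ has degree $(n-1)/2$ and $F_1'$ generators have degree $(n-1)/2$, while the Koszul piece is shifted by the degree of $\textrm{Pf}_i(X)$), giving exactly the two-row table in the statement: the $(n-3)/2$ row with entries $n-1$ (the $\mu(I)-1$ original pfaffian generators in $\tor_1$) and $1$ (the class in $\tor_2$ coming from $F_3$, i.e. the socle-type syzygy), and the $(n-1)/2$ row with the Koszul binomials $\binom{n-1}{k}$.

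The main obstacle will be pinning down the precise graded shifts and confirming the $q_i \otimes k = 0$ claims rigorously — in particular verifying that $q_2$ genuinely has entries of positive degree (so that no extra cancellation or contribution appears), which amounts to a degree count on the lift $F_3 \to G_2$, and checking that the two nonzero ``$1$'' entries in the table (in homological degrees $2$ and $3$, internal degree $(n-3)/2$ row) survive, i.e.\ that the map $F_3 = R \to F_2$ does not interact with the Koszul part to kill the class from $\operatorname{rank} F_3$. I would handle the $n = 5$ case separately by direct inspection, since there $n - 1 = 4$, the Koszul complex has length $4$, and the small-degree behavior (the bound $n \geq 7$ in the general statement suggests degree overlaps collapse the two strands when $n = 5$) requires checking that the strand starting in row $2$ has length exactly $3$ with entries $6, 5, 1$ rather than $\binom{4}{2}, \binom{4}{3}, \binom{4}{4} = 6, 4, 1$ — so there is an extra $+1$ in the $\tor_3$ spot coming from $F_3$ landing in the same internal degree, exactly as Corollary~\ref{torrk} predicts. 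Finally I would invoke genericity once more to conclude the resolution obtained is in fact minimal (all remaining differential entries lie in $R_+$), so the computed ranks are the actual graded Betti numbers.
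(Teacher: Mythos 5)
Your proposal is correct and follows essentially the same route as the paper's proof: identify $\mfa$ as the complete intersection on the $n-1$ off-diagonal variables in row/column $i$ of $X$, resolve it by the Koszul complex, observe that $q_1\otimes k$ is a projection of rank $n-1$, that $q_2\otimes k=0$ by a degree count, and that $q_k=0$ for $k\geq 3$ since the Buchsbaum--Eisenbud resolution has length $3$, then apply Corollary~\ref{torrk} with Remark~\ref{rk:gradedBnos} and treat $n=5$ separately because the internal degrees $(n+5)/2$ and $n$ coincide there. The only refinement worth making is that the degree count for $q_2$ is cleanest via the lifting identity $m_2q_2=q_1d_3$: the entries of $d_3=d_1^*$ are pfaffians of degree $(n-1)/2$ and the Koszul entries are linear, so $q_2$ has entries of degree $(n-3)/2\geq 1$ for all odd $n\geq 5$, including $n=5$.
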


\begin{proof}
In view of Corollary \ref{torrk}, it suffices to compute the ranks of the maps $q_i \otimes k$ for all appropriate $i$. Let $F_\bullet$ denote the minimal free resolution of the ideal of submaximal pfaffians of $X$. Observe that $F_\bullet$ is of the form
$$\xymatrix{0 \ar[r] & R \ar[r]^-{d_1^*} & R^n \ar[r]^-{X} & R^n \ar[r]^{d_1} & R,}$$
where $d_1= (\textrm{Pf}_1 (X), - \textrm{Pf}_2 (X) , \dotsc , (-1)^{n+1} \textrm{Pf}_n (X))$ (see, for instance, \cite{buchsbaum1977algebra}). Fix an integer $1 \leq \ell \leq n$ and let $K' := (\textrm{Pf}_i (X) \mid i \neq \ell )$, $K_0 := (\textrm{Pf}_\ell (X))$. Observe that $\ell$th row of $X$ generates the ideal
$$\begin{cases}
(x_{12} , \dotsc , x_{1n}) \quad \textrm{if} \ \ell=1, \\
(x_{1\ell}, \dotsc , x_{\ell-1,\ell} , x_{\ell,\ell+1}, \dotsc , x_{\ell,n}) \quad \textrm{if} \ 1<\ell<n, \\
(x_{1n} , \dotsc , x_{n-1,n} ) \quad \textrm{if} \ \ell=n. \\
\end{cases}$$
Notice that this ideal is a complete intersection on $n-1$ generators; in the notation of Setup \ref{setup5}, the ideal $\mfa$ is this complete intersection (so that $\mfa K_0 \subseteq K'$). Let $G_\bullet$ denote the Koszul complex resolving $\mfa$.

Observe that for $i \geq 3$,
$$q_i : F_{i+1} = 0 \to G_i,$$
so $q_i \otimes k = 0$ for $i \geq 3$. By counting degrees, one finds $q_2 \otimes k = 0$. Finally, the map $q_1$ is simply the projection
$$q_1 : F_2 \cong R^{n} \to G_1 \cong R^{n-1}$$
onto the appropriate summands; this map has $\rank (q_1 \otimes k) = n-1$. Combining this information with Corollary \ref{torrk} and Remark \ref{rk:gradedBnos}, for $i \geq 4$,
$$\dim_k \tor_i^R (R/J , k) = \binom{n-1}{i}.$$
For $i=3$ and $n \geq 7$,
\begingroup\allowdisplaybreaks
\begin{align*}
    \dim_k \tor_3^R (R/J)_{(n+5)/2} &= \rank G_3 \\
    &= \binom{n-1}{3} \\
     \dim_k \tor_3^R (R/J)_{n} &= \rank F_3 \\
     &= 1 .\\
\end{align*}
\endgroup
For $i=3$ and $n=5$, observe that $n= (n+5)/2$, so
\begingroup\allowdisplaybreaks
\begin{align*}
    \dim_k \tor_3^R (R/J) &= \rank F_3 + \rank G_3 \\
    &=1 +  \binom{n-1}{3} = 5. \\
\end{align*}
\endgroup
Finally, for $i=2$ and $n \geq 5$,
\begingroup\allowdisplaybreaks
\begin{align*}
    \dim_k \tor_2^R (R/J)_{(n+1)/2} &= \rank F_2 - \rank (q_1 \otimes k) \\
    &= n- (n-1) = 1 \\
     \dim_k \tor_2^R (R/J)_{(n+3)/2} &= \rank G_2 \\
     &= \binom{n-1}{2} .\\
\end{align*}
\endgroup
\end{proof}

Observe the difference between the Betti table of Proposition \ref{prop:btabforpfaffs} and the classical case of the ideal generated by all submaximal pfaffians of a generic skew symmetric matrix. In the latter case, this ideal is always a grade $3$ Gorenstein ideal (in particular, the projective dimension is $3$). After removing a generator, one sees that the projective dimension can become arbitrarily large based on the size of the matrix $X$.

Next, we want to compute the graded Betti numbers when removing a generator from an ideal of maximal minors of a generic $n \times m$ matrix. This case requires more work since the $q_\ell$ maps of Proposition \ref{theyexist} must be computed explicitly in order to compute the ranks. For convenience, we recall the definition of the Eagon-Northcott complex.

\begin{notation}
Let $V$ be a $k$-vector space, where $k$ is any field. The notation $\bigwedge^i V$ denotes the $i$th exterior power of $V$ and $D_i (V)$ denotes the $i$th divided power of $V$ (see \cite[Section A2.4]{eisenbud2013commutative} for the definition of $D_i(V)$).
\end{notation}

\begin{definition}\label{EN}
Let $\phi : F \to G$ be a homomorphism of free modules of ranks $f$ and $g$, respectively, with $f \geq g$. Let $c_\phi$ be the image of $\phi$ under the isomorphism $\Hom_R (F,G) \xrightarrow{\cong} F^* \otimes G $. The \emph{Eagon-Northcott complex} is the complex
$$0 \to D_{f-g} (G^*) \otimes \bigwedge^f F \to D_{f-g-1} (G^*) \otimes \bigwedge^{f-1} F \to \cdots \to G^* \otimes \bigwedge^{g+1} F \to \bigwedge^g F \to \bigwedge^g G$$
with differentials in homological degree $\geq 2$ induced by multiplication by the element $c_\phi \in F^* \otimes G$, and the map $\bigwedge^g F \to \bigwedge^g G$ is $\bigwedge^g \phi$.
\end{definition}

\begin{setup}\label{setup6}
Let $R= k[x_{ij} \mid 1\leq i \leq n, 1 \leq j \leq m]$ and $M = (x_{ij})_{1\leq i \leq n, 1 \leq j \leq m}$ denote a generic $n \times m$ matrix, where $n \leq m$. View $M$ as a homomorphism $M : F \to G$ of free modules $F$ and $G$ of rank $m$ and $n$, respectively.

Let $f_i$, $i=1, \dots , m$, $g_j$, $j=1 , \dots , n$ denote the standard bases with respect to which $M$ has the above matrix representation. Write 
$$\bigwedge^n F = F' \oplus Rf_\sigma$$
for some free module $F'$, where $\sigma = (\sigma_1 < \cdots < \sigma_n)$ is a fixed index set, and the notation $f_\sigma$ denotes $f_{\sigma_1} \w \cdots \w f_{\sigma_n}$. Recall that the Eagon-Northcott complex of Definition \ref{EN} resolves the quotient ring defined by $I_n (M)$, the ideal of $n \times n$ minors of $M$. 

We will consider the submodule of $\bigwedge^{n+\ell} F$ generated by all elements of the form $f_{\sigma,\tau}$, where $\tau= (\tau_1 < \cdots < \tau_\ell)$ and $\sigma \cap \tau = \varnothing$. The notation $f_{\sigma,\tau}$ denotes the element $f_\sigma \w f_\tau$. If $\tau = (\tau_1 < \cdots < \tau_n)$, let $\Delta_\tau$ denote the determinant of the matrix formed by columns $\tau_1 , \dots , \tau_n$ of $M$. Then, in the notation of Setup \ref{setup5},
$$K' = (\Delta_\tau \mid \tau \neq \sigma)$$
and $K_0 = (\Delta_\sigma)$.

Observe that the Eagon-Northcott differential $d_2 : G^* \otimes \bigwedge^{n+1} F \to \bigwedge^n F$ induces a homomorphism $d_0 : G^* \otimes \bigwedge^{n+1} F \to Rf_\sigma$ by sending
$$g_i^* \otimes f_{\{ j \} ,\sigma   } \mapsto x_{ij} f_\sigma,$$
and all other basis elements to $0$. In the notation of Setup \ref{setup5}, 
$$\mfa = ( x_{ij} \mid i=1, \dots , n, \ j \notin \sigma ).$$
This means $\mfa$ is a complete intersection generated by $n(m-n)$ elements, and hence is resolved by the Koszul complex. Moreover, $\mfa K_0 \subseteq K'$. Let
$$U = \bigoplus_{\substack{1 \leq i \leq n \\ j \notin \sigma}} Re_{ij}$$
with differential induced by the homomorphism $m_1 : U \to R$ sending $e_{ij} \mapsto x_{ij}$. If $L=(i,j)$ is a $2$-tuple, then the notation $e_L$ will denote $e_{ij}$. 
\end{setup}

The proof of the following Proposition is a straightforward computation.
\begin{prop}
Adopt notation and hypotheses of Setup \ref{setup6}. Define $q_1 : G^* \otimes \bigwedge^{n+1} F \to U$ by sending $g_i^* \otimes f_{ \{ j \} ,\sigma  } \mapsto e_{ij}$ and all other basis elements to $0$. Then the following diagram commutes:
$$\xymatrix{& G^* \otimes \bigwedge^{n+1} F \ar[dl]_-{q_1} \ar[d]^{d'_0} \\
U \ar[r]_{m_1} & \mfa, \\}$$
where $d_0' : G^* \otimes \bigwedge^{n+1} F \to R$ is the composition
$$\xymatrix{G^* \otimes \bigwedge^{n+1} F \ar[r]^-{d_0} & Rf_\sigma \ar[r] & R\\},$$
and where the second map sends $f_\sigma \mapsto 1$.
\end{prop}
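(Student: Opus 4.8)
The plan is to establish the commutativity by checking the identity $m_1 \circ q_1 = d_0'$ on a basis of $G^* \otimes \bigwedge^{n+1} F$, namely the elements $g_i^* \otimes f_T$ with $1 \leq i \leq n$ and $T \subseteq \{1,\dots,m\}$ of cardinality $n+1$. I would split these basis vectors into two families: those with $\sigma \subseteq T$, which — since $|T| = n+1$ — are precisely the vectors of the form $g_i^* \otimes f_{\{j\},\sigma}$ with $T = \sigma \cup \{j\}$ for some $j \notin \sigma$; and all the remaining ones, with $\sigma \not\subseteq T$.

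For a vector $g_i^* \otimes f_{\{j\},\sigma}$ in the first family, I would first note that $j \notin \sigma$, so $e_{ij}$ is genuinely one of the generators of $U = \bigoplus_{1 \leq i \leq n,\ j \notin \sigma} Re_{ij}$; in particular $q_1$ is a well-defined map landing in $U$. Then I trace the vector around the triangle: $q_1$ sends it to $e_{ij}$, and $m_1$ sends $e_{ij}$ to $x_{ij}$, so $m_1\circ q_1$ gives $x_{ij}$. On the other side, $d_0$ sends $g_i^* \otimes f_{\{j\},\sigma}$ to $x_{ij} f_\sigma$ by definition, and the projection $Rf_\sigma \to R$, $f_\sigma \mapsto 1$, then yields $x_{ij}$. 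Hence $m_1 \circ q_1$ and $d_0'$ agree on this vector. For a basis vector $g_i^* \otimes f_T$ in the second family, both $d_0$ and $q_1$ are defined to annihilate it, so $d_0'$ and $m_1 \circ q_1$ both send it to $0$, and they agree there too. Since the two maps $G^* \otimes \bigwedge^{n+1}F \to R$ coincide on every basis vector, they are equal, which is exactly the claimed commutativity.

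I do not expect a genuine obstacle here; this is the ``straightforward computation'' the statement advertises. The only point worth spelling out carefully is the well-definedness of $q_1$, i.e. that the column index $j$ occurring in any basis vector on which $q_1$ is nonzero lies outside $\sigma$. This is immediate from the fact that $f_j \w f_\sigma = 0$ whenever $j \in \sigma$, so that $f_{\{j\},\sigma}$ is a (nonzero) basis vector of $\bigwedge^{n+1}F$ only when $j \notin \sigma$, making $e_{ij}$ a legitimate generator of $U$. One should also observe that the sign convention used in defining $f_{\{j\},\sigma} = f_j \w f_{\sigma_1}\w\cdots\w f_{\sigma_n}$ is the same one appearing in the definition of $d_0$, so the two composites match on the nose rather than merely up to sign.
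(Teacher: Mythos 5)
Your proof is correct and is precisely the ``straightforward computation'' that the paper omits: checking $m_1\circ q_1=d_0'$ on basis elements, splitting into those of the form $g_i^*\otimes f_{\{j\},\sigma}$ and the rest, with the sensible added remark on well-definedness of $q_1$. Nothing further is needed.
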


We will need the following definition before introducing the $q_i$ maps for $i \geq 2$.
\begin{definition}\label{lsubset}
Let $\tau = (\tau_1 , \dots , \tau_\ell)$ be an indexing set of length $\ell$ with $\tau_1 < \cdots < \tau_\ell$. Let $\alpha = (\alpha_1 , \cdots , \alpha_n)$, with $\alpha_i \geq 0$ for each $i$. Define $\cat{L}_{\alpha,\tau}$ to be the subset of size $\ell$ subsets of the cartesian product
$$\{ i \mid \alpha_i \neq 0 \} \times \tau,$$
where $\{ (r_1,\tau_1) , \dots , (r_\ell, \tau_\ell ) \} \in \cat{L}_{\alpha,\tau}$ if $| \{i \mid r_i = j \} | = \alpha_j$. 

Observe that $L_{\alpha , \tau}$ is empty unless $\alpha_1 + \cdots + \alpha_n = \ell$. 
\end{definition}

\begin{example}
One easily computes:
$$\cat{L}_{(2,0,1),(1,2,3)} = \{ \{(3, 1), (1, 2), (1, 3)\}, \{(1, 1),  (3, 2),(1, 3) \}, \{ (1, 1), (1, 2), (3, 3)\} \}$$
\begingroup\allowdisplaybreaks
\begin{align*}
    \cat{L}_{(2,0,2),(1,2,3,4)} =& \{\left\{\left(3,1\right),\,\left(3,2\right),\left(1,3\right),\,\left(1,4\right)\right\},\,\left\{\left(3,1\right),\,\left(1,2\right),\,\left(3,3\right),\left(1,4\right)\right\}, \\
     &\left\{\left(1,1\right),\,\left(3,2\right),\,\left(3,3\right),\left(1,4\right)\right\},\,\left\{\left(3,1\right),\,\left(1,2\right),\,\left(1,3\right),\,\left(3,4\right)\right\}, \\
     &\left\{\left(1,1\right),\,\left(3,2\right),\,\left(1,3\right),\,\left(3,4\right)\right\},\,\left\{\left(1,1\right),\,\left(1,2\right),\,\left(3,3\right),\,\left(3,4\right)\right\}\}
\end{align*}
\endgroup
\end{example}

\begin{lemma}\label{reduct}
Let $\tau = (\tau_1 , \dots , \tau_\ell)$ be an indexing set of length $\ell$ with $\tau_1 < \cdots < \tau_\ell$. Let $\alpha = (\alpha_1 , \cdots , \alpha_n)$, with $\alpha_i \geq 0$ for each $i$. Use the notation $\alpha^i := (\alpha_1 , \dots , \alpha_i-1 , \dots , \alpha_n)$. Then any $L' \in \cat{L}_{\alpha^i , \tau \backslash \tau_k}$ is contained in a unique element $L \in \cat{L}_{\alpha , \tau}$. 
\end{lemma}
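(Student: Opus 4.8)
The plan is to exhibit the unique $L$ explicitly: it should be $L := L' \cup \{(i,\tau_k)\}$, and then verify membership and uniqueness by bookkeeping the two pieces of data that define membership in an $\cat{L}$-set, namely the multiset of first coordinates and the set of second coordinates. Before starting I would dispose of a degenerate case: if $\alpha_i = 0$ then $\alpha^i$ has a negative entry, so $\cat{L}_{\alpha^i,\tau\backslash\tau_k}$ is empty and there is nothing to prove; thus I may assume $\alpha_i \geq 1$, and also $1 \leq k \leq \ell$ so that $\tau\backslash\tau_k$ is a genuine increasing index set of length $\ell-1$.

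\textbf{Existence.} Given $L' \in \cat{L}_{\alpha^i,\tau\backslash\tau_k}$, I would note that by definition its set of second coordinates is exactly $\tau\backslash\tau_k$; in particular $(i,\tau_k)\notin L'$, so $L := L' \cup \{(i,\tau_k)\}$ has size $\ell$ and its second coordinates exhaust $\tau$. For the first coordinates: in $L'$ the value $i$ occurs $\alpha_i-1$ times and each $j$ with $j\neq i$ occurs $\alpha_j$ times, so in $L$ the value $i$ occurs $\alpha_i$ times and each $j \neq i$ occurs $\alpha_j$ times; since $\alpha_i \geq 1$ we also have $i \in \{j \mid \alpha_j \neq 0\}$. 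Hence $L \in \cat{L}_{\alpha,\tau}$ and $L' \subseteq L$.

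\textbf{Uniqueness.} Suppose $L \in \cat{L}_{\alpha,\tau}$ with $L'\subseteq L$. Since $|L| = \ell = |L'|+1$, we have $L = L' \cup \{(r,s)\}$ for exactly one pair $(r,s)\notin L'$. Comparing second coordinates, those of $L$ exhaust $\tau$ while those of $L'$ exhaust $\tau\backslash\tau_k$, which forces $s = \tau_k$. Comparing the multiplicity of the first coordinate $i$, which is $\alpha_i$ in $L$ but $\alpha_i-1$ in $L'$, forces $r = i$. Therefore $L = L'\cup\{(i,\tau_k)\}$ is the only possibility.

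I do not expect any real obstacle: the entire argument is the observation that passing from $\cat{L}_{\alpha^i,\tau\backslash\tau_k}$ to $\cat{L}_{\alpha,\tau}$ amounts to restoring the one missing second coordinate $\tau_k$ and, simultaneously, restoring the one missing occurrence of the first coordinate $i$ — and these two requirements pin down the added pair uniquely. The only point requiring mild care is the degenerate case $\alpha_i = 0$ and keeping the count conditions straight for the values $j \neq i$.
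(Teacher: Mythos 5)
Your proof is correct and follows essentially the same route as the paper: both take $L := L' \cup \{(i,\tau_k)\}$ and derive uniqueness by observing that the added pair's second coordinate must be $\tau_k$ and its first coordinate must be $i$ because $\alpha$ and $\alpha^i$ differ only in the $i$th entry. Your version merely spells out the bookkeeping (and the degenerate case $\alpha_i=0$) that the paper leaves implicit.
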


\begin{proof}
Given $L'$, take $L := L' \cup (i , j_k)$, ordered appropriately. Assume that $L' \subseteq L''$ for some other $L'' \in \cat{L}_{\alpha , \tau}$. It is easy to see that $L'' \backslash L' = (a,\sigma_k)$, where $a$ is some integer. However, since $\alpha^i$ differs by $\alpha$ by $1$ in the $i$th spot, $a=i$, whence $L = L''$ and $L$ is unique.
\end{proof}

\begin{lemma}
Adopt notation and hypotheses of Setup \ref{setup6}. Define
$$q_\ell : D_\ell (G^*) \otimes \bigwedge^{n+\ell} F \to \bigwedge^\ell U, \quad \ell \geq 2,$$
by sending 
$$g_1^{*(\alpha_1)} \cdots g_{n}^{*(\alpha_n)} \otimes f_{\tau,I\sigma} \mapsto  \sum_{L \in \cat{L}_{\alpha,\tau}} e_{L_1} \w \cdots \w e_{L_\ell},$$
where $\cat{L}_{\alpha,\tau}$ is defined in Definition \ref{lsubset}. All other basis elements are sent to $0$. Then the following diagram commutes:
$$\xymatrix{D_\ell (G^*) \otimes \bigwedge^{n+\ell} F \ar[d]_{q_\ell} \ar[r]^-{d_{\ell}} & D_{\ell-1} (G^*) \otimes \bigwedge^{n+\ell-1} F \ar[d]^{q_{\ell-1}} \\
\bigwedge^\ell U \ar[r]_{m_\ell} & \bigwedge^{\ell-1} U. \\}$$
\end{lemma}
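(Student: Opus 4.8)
The claim is a commutativity statement: $m_\ell \circ q_\ell = q_{\ell-1} \circ d_\ell$ as maps $D_\ell(G^*) \otimes \bigwedge^{n+\ell} F \to \bigwedge^{\ell-1} U$. Since both sides are $R$-linear, it suffices to check the identity on basis elements of the source, i.e.\ on elements of the form $g_1^{*(\alpha_1)} \cdots g_n^{*(\alpha_n)} \otimes f_{\tau,I\sigma}$ where $|\alpha| = \ell$ and $\tau$ is a length-$\ell$ index set disjoint from $\sigma$ (by Setup~\ref{setup6} and the definition of $q_\ell$, every other basis vector maps to $0$ under $q_\ell$, and one must also check these are killed by $q_{\ell-1}\circ d_\ell$). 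So the first step is to reduce to such a "relevant" basis element and record exactly what the Eagon--Northcott differential $d_\ell$ does to it: it is given by contraction against $c_\phi = \sum_{i,j} x_{ij}\, g_i \otimes f_j^* \in F^* \otimes G$, so
$$
d_\ell\big( g_1^{*(\alpha_1)}\cdots g_n^{*(\alpha_n)} \otimes f_{\tau,I\sigma}\big) \;=\; \sum_{i:\,\alpha_i \neq 0}\ \sum_{k=1}^{\ell} (-1)^{k+1}\, x_{i\tau_k}\; g_1^{*(\alpha_1)}\cdots g_i^{*(\alpha_i - 1)} \cdots g_n^{*(\alpha_n)} \otimes f_{\tau \backslash \tau_k,\, I\sigma},
$$
where again any summand whose divided-power/wedge part is not "relevant" contributes $0$ after applying $q_{\ell-1}$. (The sign and the precise indexing here must be checked against the orientation conventions of Definition~\ref{EN}; this is bookkeeping but needs care.)

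**The combinatorial heart.** Now apply $q_{\ell-1}$ to the right-hand side: each surviving summand with exponent vector $\alpha^i$ and index set $\tau\backslash\tau_k$ contributes $x_{i\tau_k}$ times $\sum_{L' \in \cat{L}_{\alpha^i, \tau\backslash\tau_k}} e_{L'_1}\w\cdots\w e_{L'_{\ell-1}}$. On the other side, $m_\ell \circ q_\ell$ sends the basis element to $m_\ell\big(\sum_{L \in \cat{L}_{\alpha,\tau}} e_{L_1}\w\cdots\w e_{L_\ell}\big) = \sum_{L \in \cat{L}_{\alpha,\tau}} \sum_{p=1}^{\ell} (-1)^{p+1} x_{L_p} \, e_{L_1}\w\cdots \widehat{e_{L_p}}\cdots\w e_{L_\ell}$, where $x_{L_p} = x_{ij}$ if $L_p = (i,j)$. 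The goal is to match these two sums. The natural bijection is exactly the one packaged in Lemma~\ref{reduct}: removing the pair $(i,\tau_k)$ from an $L \in \cat{L}_{\alpha,\tau}$ yields an $L' \in \cat{L}_{\alpha^i, \tau\backslash\tau_k}$, and conversely every $L'$ together with a choice of which $\tau_k$ to reinstate and which row $i$ to attach (forced to be the row that $\alpha$ and $\alpha^i$ differ in) recovers a unique $L$. So I would: (1) expand $m_\ell\circ q_\ell$ and reindex the sum by the pair $(L', (i,\tau_k))$ using Lemma~\ref{reduct}; (2) expand $q_{\ell-1}\circ d_\ell$ and reindex by the same data; (3) check the two expansions agree term-by-term, including signs.

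**The main obstacle — signs.** The structural matching of terms is essentially Lemma~\ref{reduct}, so the real work is verifying that the signs line up. On the $m_\ell \circ q_\ell$ side the sign $(-1)^{p+1}$ records the position of the removed pair $(i,\tau_k)$ within the ordered tuple $L$; on the $q_{\ell-1}\circ d_\ell$ side the sign $(-1)^{k+1}$ records the position of $\tau_k$ within $\tau$, and there is an additional implicit sign hiding in how $f_{\tau\backslash\tau_k, I\sigma}$ is re-expressed in the standard ordered basis and in how $\cat{L}_{\alpha^i,\tau\backslash\tau_k}$ is ordered relative to $\cat{L}_{\alpha,\tau}$. One must also confirm that "irrelevant" summands of $d_\ell$ (those landing in basis elements not of the form $g^{*(\beta)}\otimes f_{\mu, I\sigma}$ with $\mu$ disjoint from $\sigma$) are genuinely killed by $q_{\ell-1}$ so they may be discarded — this uses that $\tau \cap \sigma = \varnothing$ forces every $\tau_k \notin \sigma$, keeping the relevant shape stable under the differential. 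I expect the sign reconciliation to be the only genuinely delicate point; once the ordering conventions are fixed consistently, the identity falls out of Lemma~\ref{reduct} by a direct comparison, so I would state the conventions explicitly at the outset and then present the term-by-term check.
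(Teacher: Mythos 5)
Your proposal is correct and follows essentially the same route as the paper's proof: expand both compositions on the relevant basis elements, discard the summands of the Eagon--Northcott differential that contract against $\sigma$ (which $q_{\ell-1}$ kills), and reindex via Lemma \ref{reduct}. The sign issue you flag as the main obstacle in fact dissolves immediately, since the $j$th pair of any $L \in \cat{L}_{\alpha,\tau}$ has second coordinate $\tau_j$ by construction, so the position of the removed pair in $L$ coincides with the position of $\tau_j$ in $\tau$ and the signs $(-1)^{j+1}$ on the two sides match term by term.
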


\begin{proof}
We first compute the image of the element
$$g_1^{*(\alpha_1)} \cdots g_n^{*(\alpha_n)} \otimes f_{\tau,\sigma}$$
going clockwise about the diagram. We obtain:
\begingroup\allowdisplaybreaks
\begin{align*}
    g_1^{*(\alpha_1)} \cdots g_n^{*(\alpha_n)} \otimes f_{\tau,\sigma} &\mapsto \sum_{\substack{\{ i \mid \alpha_i \neq 0 \} \\ 1 \leq j \leq \ell }} (-1)^{j+1} x_{i\tau_j} g_1^{*(\alpha_1)} \cdots g_i^{*(\alpha_i-1)} \cdots g_n^{*(\alpha_n)} \otimes f_{\tau \backslash \tau_j,\sigma} \\
    &+\sum_{\substack{\{ i \mid \alpha_i \neq 0 \} \\ 1 \leq j \leq n }} (-1)^{m-n+j+1} x_{i \sigma_j} g_1^{*(\alpha_1)} \cdots g_i^{*(\alpha_i-1)} \cdots g_n^{*(\alpha_n)} \otimes f_{J ,\sigma\backslash \sigma_j} \\
    &\mapsto \sum_{\substack{\{ i \mid \alpha_i \neq 0 \} \\ 1 \leq j \leq \ell }} \sum_{L \in \cat{L}_{\alpha^i,\tau\backslash \tau_j} }  (-1)^{j+1} x_{i \tau_j} e_{L_1} \w \cdots \w e_{L_{\ell-1}} \\ 
\end{align*}
\endgroup
where in the above, denote $\alpha^i := (\alpha_1 , \dots , \alpha_i-1 , \dots , \alpha_n)$ and $L_i$ the $i$th entry of $L \in \cat{L}_{\alpha , \tau \backslash \tau_j}$. According to Lemma \ref{reduct},
\begingroup\allowdisplaybreaks
\begin{align*}
    &\sum_{\substack{\{ i \mid \alpha_i \neq 0 \} \\ 1 \leq j \leq \ell }} \sum_{L \in \cat{L}_{\alpha^i,\tau \backslash \tau_j} }  (-1)^{j+1} x_{i \tau_j} e_{L_1} \w \cdots \w e_{L_{\ell-1}} \\
    =& \sum_{ 1 \leq j \leq \ell } \sum_{L \in \cat{L}_{\alpha,\tau} }  (-1)^{j+1} x_{L_j} e_{L_1} \w \cdots \w \widehat{e_{L_j}} \w \cdots \w e_{L_{\ell}}. \\
\end{align*}
\endgroup
Moving in the counterclockwise direction, we obtain:
\begingroup\allowdisplaybreaks
\begin{align*}
    g_1^{*(\alpha_1)} \cdots g_n^{*(\alpha_n)} \otimes f_{\tau,\sigma} &\mapsto  \sum_{L \in \cat{L}_{\alpha,\tau}}  e_{L_1} \w \cdots \w e_{L_\ell} \\
    &\mapsto  \sum_{L \in \cat{L}_{\alpha,\tau}} \sum_{1 \leq j \leq \ell}  (-1)^{j+1} x_{L_j} e_{L_1} \w \cdots \w \widehat{e_{L_j}} \w \cdots \w e_{L_\ell} \\ 
\end{align*}
\endgroup
\end{proof}

\begin{lemma}\label{qranks}
Adopt notation and hypotheses of Setup \ref{setup6}. Then the maps
$$q_\ell : D_{\ell} (G^*) \otimes \bigwedge^{n+\ell} F \to \bigwedge^\ell U$$
have $\rank (q_\ell \otimes k) = \binom{n+\ell-1}{\ell} \cdot \binom{m-n}{\ell}$ for all $\ell =1, \cdots , m-n+1$ and $\rank (q_\ell \otimes k ) = 0$ for all $\ell = m-n+2, \dots , n(m-n)$. 
\end{lemma}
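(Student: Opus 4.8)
The plan is to read the rank off directly from the explicit formula for $q_\ell$ in the preceding Lemma, exploiting the crucial feature that $q_\ell$ has \emph{constant} entries (every matrix coefficient is $0$ or $\pm 1$): hence $q_\ell\otimes k$ is given by the very same matrix, and its rank may be computed over $k$ by hand. First I would dispose of the large-$\ell$ range: for $\ell\geq m-n+1$ the source $D_\ell(G^*)\otimes\bigwedge^{n+\ell}F$ vanishes, because $\bigwedge^{n+\ell}R^m=0$ once $n+\ell>m$; so $q_\ell=0$ for every $\ell$ from $m-n+1$ up to $n(m-n)$, which is consistent with $\binom{m-n}{\ell}=0$ throughout that range. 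This leaves the genuine case $1\leq\ell\leq m-n$.

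In that case the image of $q_\ell\otimes k$ is spanned by the images of the source basis vectors $g_1^{*(\alpha_1)}\cdots g_n^{*(\alpha_n)}\otimes f_{\tau,\sigma}$ with $\alpha_i\geq 0$, $\sum_i\alpha_i=\ell$, and $\tau$ an $\ell$-subset of $\{1,\dots,m\}\setminus\sigma$, since every other basis vector of $D_\ell(G^*)\otimes\bigwedge^{n+\ell}F$ maps to $0$. There are $\binom{n+\ell-1}{\ell}$ admissible $\alpha$ and $\binom{m-n}{\ell}$ admissible $\tau$, so it suffices to show that these $\binom{n+\ell-1}{\ell}\binom{m-n}{\ell}$ images are $k$-linearly independent. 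I would prove this by exhibiting pairwise disjoint supports relative to the monomial basis $\{e_{r_1 s_1}\wedge\cdots\wedge e_{r_\ell s_\ell}\}$ of $\bigwedge^\ell U$: any monomial occurring in $q_\ell(g^{*\alpha}\otimes f_{\tau,\sigma})=\sum_{L\in\cat{L}_{\alpha,\tau}}e_{L_1}\wedge\cdots\wedge e_{L_\ell}$ comes from some $L\in\cat{L}_{\alpha,\tau}$, and from the list of index pairs in $L$ one reads off $\tau$ as the set of second coordinates and $\alpha$ as the multiplicity vector of the first coordinates — this is exactly the data encoded in Definition \ref{lsubset}. Hence the pair $(\alpha,\tau)$ is recovered from any single monomial in the image, so two distinct pairs can never contribute the same monomial. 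Moreover distinct $L\in\cat{L}_{\alpha,\tau}$ have pairwise distinct second coordinates, so they correspond to distinct $\ell$-element subsets of the basis of $U$, and no cancellation occurs within a single image; and $\cat{L}_{\alpha,\tau}\neq\varnothing$ because $\sum_i\alpha_i=\ell=|\tau|$ can always be realized by distributing the first coordinates over $\tau_1<\cdots<\tau_\ell$ with the prescribed multiplicities. Thus each image is a nonzero $\pm1$-sum of distinct basis monomials and the supports are pairwise disjoint, which forces linear independence and yields $\rank(q_\ell\otimes k)=\binom{n+\ell-1}{\ell}\binom{m-n}{\ell}$.

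I expect the only real obstacle to be making the support/recovery step fully rigorous — in particular checking that the monomials $e_{L_1}\wedge\cdots\wedge e_{L_\ell}$, as $L$ ranges over $\cat{L}_{\alpha,\tau}$, are pairwise distinct basis elements of $\bigwedge^\ell U$ (so that $q_\ell(g^{*\alpha}\otimes f_{\tau,\sigma})$ is literally a sum of $\binom{\ell}{\alpha_1,\dots,\alpha_n}$ distinct monomials), and that the assignment ``$L\mapsto(\alpha,\tau)$'' is well defined independently of which source basis vector produced the monomial. Everything else reduces to the two binomial counts recorded above, together with the vanishing of $\bigwedge^{n+\ell}F$ for $\ell>m-n$; note also that, because the argument only uses disjointness of supports, the conclusion holds over an arbitrary field $k$.
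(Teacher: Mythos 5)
Your proposal is correct and follows essentially the same route as the paper: kill the high-$\ell$ range by vanishing of the Eagon--Northcott terms, then count the basis elements $g_1^{*(\alpha_1)}\cdots g_n^{*(\alpha_n)}\otimes f_{\tau,\sigma}$ with nonzero image and show their images are independent via disjointness of the sets $\cat{L}_{\alpha,\tau}$. In fact your version is somewhat more complete than the paper's, since you also recover $\tau$ (not just $\alpha$) from the support of a monomial and check nonvanishing of each image, details the paper leaves implicit.
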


\begin{remark}
If we use the convention that $\binom{r}{s} = 0$ for $s>r$, then the above says that $\rank (q_\ell \otimes k) = \binom{n+\ell-1}{\ell} \cdot \binom{m-n}{\ell}$ for all $\ell \geq 1$.
\end{remark}

\begin{proof}
First observe that since the Eagon-Northcott complex is $0$ in homological degrees $\geq m-n+2$, it is immediate that $q_\ell = 0$ for $\ell \geq m-n+2$. For the first claim, this follows from the fact that for $\alpha \neq \alpha'$,
$$\cat{L}_{\alpha,\tau}\cap \cat{L}_{\alpha',\tau} = \varnothing,$$
which implies that the image of each element $g_1^{*(\alpha_1)} \cdots g_n^{*(\alpha_n)} \otimes f_{\tau,\sigma} \in D_{\ell} (G^*) \otimes \bigwedge^{n+\ell} F$ under $q_\ell$ has maximal rank (recall that these are the only elements with nonzero image). This rank is computed by counting all such basis elements; it is clear that there are $\binom{m-n}{\ell}$ possible elements of the form $f_{\tau,\sigma}$, since $\sigma$ is a fixed index set of length $n$. The rank of $D_\ell (G^*)$ is $\binom{n+\ell-1}{\ell}$, therefore we conclude that the rank of each $q_\ell$ is
$$\binom{n+\ell-1}{\ell} \binom{m-n}{\ell}$$
\end{proof}

\begin{theorem}\label{thm:btabforsinglemaxlminor}
Adopt notation and hypotheses of Setup \ref{setup6}. If $\tau = (\tau_1 < \cdots < \tau_n)$, let $\Delta_\tau$ denote the determinant of the matrix formed by columns $\tau_1 , \dots , \tau_n$ of $M$. Then the ideal
$$K' := (\Delta_\tau \mid \tau \neq \sigma)$$
has Betti table
\begin{equation*}\begin{tabular}{C|C C C C C C C C C}
     & 0 & 1 & \cdots &  \ell & \cdots & n(m-n)-1 & n(m-n)  \\
     \hline 
   0  & 1 & \cdot & \cdots & \cdot &\cdots & \cdot &  \cdot \\
 
 \vdots & & & & & & & \\
   
   n-1 & \cdot & \binom{m}{n}-1 & \cdots & \binom{n+\ell-2}{\ell-1}\Big( \binom{m}{n+\ell-1} -  \binom{m-n}{\ell-1}\Big) & \cdots & \cdot   & \cdot \\
   
   n & \cdot & \cdot  & \cdots & \binom{n(m-n)}{\ell} - \binom{n+\ell-1}{\ell} \binom{m-n}{\ell} &  \cdots & n(m-n)   & 1 \\
\end{tabular}
\end{equation*}
\end{theorem}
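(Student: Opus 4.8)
The plan is to extract the Betti table directly from the trimming complex, using the machinery already assembled. Recall from Setup~\ref{setup6} that $d_0(G^*\otimes\bigwedge^{n+1}F)=\mfa f_\sigma$ with $\mfa=(x_{ij}\mid 1\le i\le n,\ j\notin\sigma)$ a complete intersection, and that $\mfa K_0\subseteq K'$. Hence, by the Proposition following Setup~\ref{setup5}, the mapping cone of Theorem~\ref{theres} built from the Eagon--Northcott resolution $F_\bullet$ of $R/I_n(M)$ and the Koszul complex $G_\bullet=\bigwedge^\bullet U$ on the $n(m-n)$ variables $x_{ij}$ with $j\notin\sigma$ is a free resolution of $R/K'$. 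Both $F_\bullet$ and $G_\bullet$ are minimal (the Eagon--Northcott differentials have entries among the $x_{ij}$ together with the $n\times n$ minors, all in $R_+$; the Koszul differential on a sequence of variables is minimal). So Corollary~\ref{torrk} and Remark~\ref{rk:gradedBnos} apply and reduce the theorem to bookkeeping of ranks and internal degrees.

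Next I would assemble the numerical data. From the shape of the Eagon--Northcott complex, $F_i=D_{i-1}(G^*)\otimes\bigwedge^{n+i-1}F$ has $\rank F_i=\binom{n+i-2}{i-1}\binom{m}{n+i-1}$ and is generated in internal degree $n+i-1$, hence contributes only to row $n-1$; in particular $\mu(I)=\binom{m}{n}$. For the Koszul strand, $\rank G_i=\binom{n(m-n)}{i}$ and $\mu(\mfa)=n(m-n)$; crucially, in the bottom row of diagram~\eqref{comx} the map $G_1\to R$ is multiplication by $d_1(e_0)=\Delta_\sigma$, a form of degree $n$, so as a graded subcomplex of the trimming complex the Koszul strand is internally shifted by $n$ — its homological-degree-$i$ term is generated in internal degree $n+i$, and it contributes only to row $n$. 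Finally Lemma~\ref{qranks} gives $\rank(q_\ell\otimes k)=\binom{n+\ell-1}{\ell}\binom{m-n}{\ell}$ for every $\ell\ge1$, and by construction $q_{i-1}\colon F_i\to G_{i-1}$ is homogeneous of degree $0$ (source and target both in internal degree $n+i-1$) while $q_i\colon F_{i+1}\to G_i$ is homogeneous of degree $0$ (both in internal degree $n+i$).

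Feeding this into Corollary~\ref{torrk} and splitting the equality by internal degree — noting that the $F$-part of $\tor_i$ lies in degree $n+i-1$ and is cut down only by $q_{i-1}$, while the $G$-part lies in degree $n+i$ and is cut down only by $q_i$ — one obtains, for $i\ge2$,
\begin{align*}
\dim_k\tor_i^R(R/K',k)_{n+i-1}&=\rank F_i-\rank(q_{i-1}\otimes k)=\binom{n+i-2}{i-1}\Big(\binom{m}{n+i-1}-\binom{m-n}{i-1}\Big),\\
\dim_k\tor_i^R(R/K',k)_{n+i}&=\rank G_i-\rank(q_i\otimes k)=\binom{n(m-n)}{i}-\binom{n+i-1}{i}\binom{m-n}{i},
\end{align*}
together with $\dim_k\tor_0^R(R/K',k)=1$, $\dim_k\tor_1^R(R/K',k)_n=\mu(K')=\binom{m}{n}+n(m-n)-1-\rank(q_1\otimes k)=\binom{m}{n}-1$, and $\dim_k\tor_1^R(R/K',k)_{n+1}=\rank G_1-\rank(q_1\otimes k)=0$. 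Reading the first displayed line (with $\ell=i$) off as the row-$(n-1)$, column-$\ell$ entry and the second as the row-$n$, column-$\ell$ entry, and using $\binom{r}{s}=0$ for $s>r$ (so the Eagon--Northcott strand terminates at $\ell=m-n+1$ while the Koszul strand runs out to $\ell=n(m-n)=\pdim R/K'$, whose last term contributes $\binom{n(m-n)}{n(m-n)}=1$ in row $n$), reproduces exactly the table in the statement.

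The only delicate point is the internal-degree bookkeeping just described: one must verify that the two strands of the resolution land in the two distinct rows $n-1$ and $n$, that the comparison maps $q_\ell$ respect these internal degrees so that Corollary~\ref{torrk} can be applied degree by degree via Remark~\ref{rk:gradedBnos}, and that no additional cancellation can occur since the entire nonminimal part of the trimming complex is accounted for by the maps $q_\ell\otimes k$. The substantive work — constructing the explicit comparison maps $q_\ell$ and computing their ranks — has already been carried out in the lemmas preceding the statement, so what remains is essentially the simplification of binomial coefficients.
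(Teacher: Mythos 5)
Your proposal is correct and follows essentially the same route as the paper: apply Corollary \ref{torrk} and Remark \ref{rk:gradedBnos} to the trimming complex built from the Eagon--Northcott and Koszul resolutions, feed in the ranks of $E_\ell$, $K_\ell$, and $q_\ell\otimes k$ from Lemma \ref{qranks}, and separate the two rows of the table by internal degree. Your additional checks (minimality of both strands, the degree-$n$ shift of the Koszul strand coming from multiplication by $\Delta_\sigma$, and the explicit computation of $\mu(K')$) are details the paper leaves implicit, and your degree subscripts $n+i-1$ and $n+i$ are the ones consistent with the rows $n-1$ and $n$ of the stated table.
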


\begin{proof}
We employ Corollary \ref{torrk} and Remark \ref{rk:gradedBnos}. By selection, $\mfa K_0 \subseteq K'$. Let $E_\bullet$ denote the Eagon-Northcott complex as in Setup \ref{setup6}. For $\ell \geq 1$,
$$\rank E_\ell = \binom{n+\ell-2}{\ell-1} \binom{m}{n+\ell-1}.$$
Similarly, let $K_\bullet$ denote the Koszul complex resolving $\mfa$ as in Setup \ref{setup6}. Then
$$\rank K_\ell = \binom{n(m-n)}{\ell}.$$
Combining the information above with that of Lemma \ref{qranks}, Corollary \ref{torrk}, and Remark \ref{rk:gradedBnos}, we have:
\begingroup\allowdisplaybreaks
\begin{align*}
    \dim_k \tor_\ell^R (R/K' , k)_{n+\ell} &= \rank E_\ell - \rank_k (q_{\ell-1} \otimes k) \\
    &= \binom{n+\ell-2}{\ell-1} \binom{m}{n+\ell-1}- \binom{n+\ell-2}{\ell-1} \binom{m-n}{\ell-1}, \\
    \dim_k \tor_\ell^R (R/K' , k)_{n+\ell+1} &= \rank K_{\ell} - \rank_k (q_{\ell} \otimes k) \\
    &=\binom{n(m-n)}{\ell} - \binom{n+\ell-1}{\ell} \binom{m-n}{\ell} . \\
\end{align*}
\endgroup
This concludes the proof.
\end{proof}

\section{Betti Tables for a Class of Determinantal Facet Ideals}\label{maxlminors}

In this section we consider the case for removing multiple generators from the ideal of maximal minors of a generic $n \times m$ matrix $M$. Such ideals belong to the class of ideals called determinantal facet ideals, which were studied in \cite{ene2013determinantal} and \cite{herzog2017linear}. Graded Betti numbers for these ideals appearing in higher degrees have not been previously computed, even in simple cases. In Theorem \ref{subres}, the graded Betti numbers of an infinite class of determinantal facet ideals defining quotient rings of regularity $n+1$ are computed explicitly in all degrees. In \cite{herzog2017linear} the linear strand for such ideals is computed in terms of the $f$-vector of some associated simplicial complex. We use the linear strand of Theorem \ref{subres} to deduce the $f$-vector of the simplicial complex associated to an $n$-uniform clutter obtained by removing pairwise disjoint subsets from all $n$-subsets of $[m]$ (see Corollary \ref{cor:fvect}).

\begin{setup}\label{setup7}
Let $R= k[x_{ij} \mid 1\leq i \leq n, 1 \leq j \leq m]$ and $M = (x_{ij})_{1\leq i \leq n, 1 \leq j \leq m}$ denote a generic $n \times m$ matrix, where $n \leq m$. View $M$ as a homomorphism $M : F \to G$ of free modules $F$ and $G$ of rank $m$ and $n$, respectively.

Fix indexing sets $\sigma_j = (\sigma_{j1}< \cdots < \sigma_{jn})$ for $j=1,\dots , r$ pairwise disjoint; that is, $\sigma_i \cap \sigma_j = \varnothing$ for $i \neq j$ (this intersection is taken as sets).  

Let $f_i$, for $i=1, \dots , m$, and $g_j$, for $j=1 , \dots , n$ denote the standard bases with respect to which $M$ has the above matrix representation. Write 
$$\bigwedge^n F =  F' \oplus Rf_{\sigma_1} \oplus \cdots \oplus Rf_{\sigma_r}$$
for some free module $F'$, where the notation $f_{\sigma_j}$ denotes $f_{\sigma_{j1}} \w \cdots \w f_{\sigma_{jn}}$. Recall that the Eagon-Northcott complex of Definition \ref{EN} resolves the quotient ring defined by $I_n(M)$. If $\tau = (\tau_1 < \cdots < \tau_n)$, let $\Delta_\tau$ denote the determinant of the matrix formed by columns $\tau_1 , \dots , \tau_n$ of $M$. Then, in the iterated version of Setup \ref{setup5},
$$K' = (\Delta_\tau \mid \tau \neq \sigma_j, \ j=1, \dotsc , r)$$
and $K_0^j = (\Delta_{\sigma_j})$.  

Observe that the Eagon-Northcott differential $d_2 : G^* \otimes \bigwedge^{n+1} F \to \bigwedge^m F$ induces homomorphisms $d^\ell_0 : G^* \otimes \bigwedge^{n+1} F \to Rf_{\sigma_j}$ by sending
$$g_i^* \otimes f_{ \{ \ell \},\sigma_j  } \mapsto x_{i\ell} f_{\sigma_j},$$
and all other basis elements to $0$. In the notation of Setup \ref{setup5}, this means we are considering the family of ideals
$$\mfa_j = ( x_{i\ell} \mid i=1, \dots , n, \ \ell \notin \sigma_j ).$$
For each $j=1, \dots , r$, $\mfa_j$ is a complete intersection generated by $n(m-n)$ elements, and hence is resolved by the Koszul complex. Let
$$U_j = \bigoplus_{\substack{1 \leq i \leq n \\ \ell \notin \sigma_j}} Re_{i\ell}$$
with differential induced by the homomorphism $m^j_1 : U_j \to R$ sending $e_{i\ell} \mapsto x_{i\ell}$. If $L=(i,j)$ is a $2$-tuple, then the notation $e_L$ will denote $e_{ij}$. 
\end{setup}

\begin{remark}
The assumption $\sigma_i \cap \sigma_j = \varnothing$ for $i \neq j$ implies that, if we define $K' := (\Delta_\tau \mid \tau \neq \sigma_j, \ j=1 , \dots , r )$, each $\Delta_{\sigma_j}$ satisfies $\mfa_j \Delta_{\sigma_j} \subset K'$. This is significant since it allows us to use the resolution of Theorem \ref{itres}. If the indexing sets were not pairwise disjoint, then we would have to apply Theorem \ref{theres} iteratively and compute the minimal presenting matrix at each step explicitly.
\end{remark}

\begin{prop}
Adopt notation and hypotheses of Setup \ref{setup7}. Define $q^j_1 : G^* \otimes \bigwedge^{n+1} F \to U_j$ by sending $g_i^* \otimes f_{ \{ \ell \}, \sigma_j } \mapsto e_{i\ell}$ and all other basis elements to $0$. Then the following diagram commutes:
$$\xymatrix{& G^* \otimes \bigwedge^{n+1} F \ar[dl]_-{q^j_1} \ar[d]^{d'_0} \\
U_j \ar[r]_{m_1^j} & \mfa_j, \\}$$
where $d_0' : G^* \otimes \bigwedge^{n+1} F \to R$ is the composition
$$\xymatrix{G^* \otimes \bigwedge^{n+1} F \ar[r]^-{d_0} & Rf_{\sigma_j} \ar[r] & R\\},$$
and where the second map sends $f_{\sigma_j} \mapsto 1$. 
\end{prop}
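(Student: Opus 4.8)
The plan is to check commutativity directly on a basis of $G^* \otimes \bigwedge^{n+1} F$, since all the maps involved are $R$-linear. A basis is given by the elements $g_i^* \otimes f_T$ with $1 \leq i \leq n$ and $T$ an $(n+1)$-subset of $\{1, \dots, m\}$, and I would split the verification into two cases: either $\sigma_j \subseteq T$ — equivalently $T = \{\ell\} \cup \sigma_j$ for a (then unique) index $\ell \notin \sigma_j$ — or $\sigma_j \not\subseteq T$.

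For the first case I would invoke Definition \ref{EN}: the Eagon--Northcott differential $d_2 \colon G^* \otimes \bigwedge^{n+1}F \to \bigwedge^n F$ is induced by the element $c_M \in F^*\otimes G$ corresponding to $M$, i.e. it pairs the $G$-component of $c_M = \sum_{i,\ell} x_{i\ell}\, f_\ell^*\otimes g_i$ against $g_i^*$ and contracts the $F^*$-component against $f_T$. Extracting the coefficient of $f_{\sigma_j}$ then reproduces exactly the formula recorded in Setup \ref{setup7}, namely $d_0(g_i^*\otimes f_{\{\ell\},\sigma_j}) = x_{i\ell} f_{\sigma_j}$; composing with $f_{\sigma_j}\mapsto 1$ gives $d_0'(g_i^*\otimes f_{\{\ell\},\sigma_j}) = x_{i\ell}$. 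On the other side, $q_1^j(g_i^*\otimes f_{\{\ell\},\sigma_j}) = e_{i\ell}$ by definition and $m_1^j(e_{i\ell}) = x_{i\ell}$, so $m_1^j\circ q_1^j$ produces $x_{i\ell}$ as well. In the second case $q_1^j$ kills $g_i^*\otimes f_T$ by definition, so $m_1^j\circ q_1^j$ vanishes there; and since $\sigma_j$ is not obtained from $T$ by deleting a single element, the coefficient of $f_{\sigma_j}$ in $d_2(g_i^*\otimes f_T)$ is zero, so $d_0'(g_i^*\otimes f_T) = 0$. Hence $m_1^j\circ q_1^j = d_0'$ on every basis element, which is the claim.

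This is word-for-word the iterated analogue of the Proposition following Setup \ref{setup6} (replace $\sigma$ by $\sigma_j$ and $U$ by $U_j$); note that the pairwise disjointness of the $\sigma_j$ is irrelevant here — it is needed only for the acyclicity statement in Theorem \ref{itres}. The one place requiring care is the sign and ordering bookkeeping: one must fix orderings for the exterior powers (and the corresponding conventions for the divided-power duals) so that the intrinsic contraction description of $d_2$ matches the formula $g_i^*\otimes f_{\{\ell\},\sigma_j}\mapsto x_{i\ell}f_{\sigma_j}$ posited in Setup \ref{setup7}. Once those conventions are pinned down consistently, as they implicitly are in Setup \ref{setup6}, there is no real obstacle and the computation is routine — which is why the proof can reasonably be left as ``a straightforward computation'' in the style of the non-iterated version.
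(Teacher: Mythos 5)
Your proof is correct and is exactly the ``straightforward computation'' the paper alludes to (and omits) for this proposition and its non-iterated counterpart after Setup \ref{setup6}: a case split over basis elements $g_i^*\otimes f_T$ according to whether $T=\{\ell\}\cup\sigma_j$, with both compositions yielding $x_{i\ell}$ in that case and $0$ otherwise. Your side remarks — that the disjointness of the $\sigma_j$ plays no role here, and that the only delicate point is matching sign/ordering conventions with the formula for $d_0$ posited in Setup \ref{setup7} — are both accurate.
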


\begin{prop}
Adopt notation and hypotheses of Setup \ref{setup7}. Define
$$q^j_\ell : D_\ell (G^*) \otimes \bigwedge^{n+\ell} F \to \bigwedge^\ell U_j, \quad \ell \geq 2,$$
by sending 
$$g_1^{*(\alpha_1)} \cdots g_{n}^{*(\alpha_n)} \otimes f_{\tau,\sigma_j} \mapsto (-1)^{n} \sum_{L \in \cat{L}_{\alpha,\tau}} e_{L_1} \w \cdots \w e_{L_\ell},$$
where $\cat{L}_{\alpha,\tau}$ is defined in Definition \ref{lsubset}. All other basis elements are sent to $0$. Then the following diagram commutes:
$$\xymatrix{D_\ell (G^*) \otimes \bigwedge^{n+\ell} F \ar[d]_{q^j_\ell} \ar[r]^-{d_{\ell}} & D_{\ell-1} (G^*) \otimes \bigwedge^{n+\ell-1} F \ar[d]^{q^j_{\ell-1}} \\
\bigwedge^\ell U_j \ar[r]_{m^j_\ell} & \bigwedge^{\ell-1} U_j \\},$$
where $m_\ell^j$ is the standard Koszul differential induced by the map $m_1^j$ as in Setup \ref{setup7}.
\end{prop}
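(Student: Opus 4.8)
The plan is to verify the square commutes by a direct computation on basis elements, carried out one index $j$ at a time and reduced to the single-generator computation that appears in the corresponding lemma for Setup \ref{setup6}. Fix $j$. A basis for $D_\ell(G^*)\otimes\bigwedge^{n+\ell}F$ consists of the elements $g_1^{*(\alpha_1)}\cdots g_n^{*(\alpha_n)}\otimes f_\mu$ with $\alpha_1+\cdots+\alpha_n=\ell$ and $\mu$ a size-$(n+\ell)$ subset of $\{1,\dots,m\}$, and $q^j_\ell$ kills all of these except the ones of the form $g_1^{*(\alpha_1)}\cdots g_n^{*(\alpha_n)}\otimes f_{\tau,\sigma_j}$ with $|\tau|=\ell$ and $\tau\cap\sigma_j=\varnothing$. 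So two cases arise.

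If $g^{*\alpha}\otimes f_\mu$ is a basis element with $\sigma_j\not\subseteq\mu$, then $q^j_\ell$ annihilates it, while $d_\ell(g^{*\alpha}\otimes f_\mu)$ is a sum of terms $g^{*\alpha'}\otimes f_{\mu'}$ with $\mu'\subsetneq\mu$ (the Eagon--Northcott differential here being multiplication by $c_\phi$, hence contracting a single $f$-index), so $\sigma_j\not\subseteq\mu'$ and every such term is killed by $q^j_{\ell-1}$. Thus the square commutes trivially on these elements; note that $q^j_{\ell-1}$ only detects the distinguished block $\sigma_j$, so it is irrelevant here whether some other block $\sigma_i$ happens to sit inside $\mu'$.

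For an element $g_1^{*(\alpha_1)}\cdots g_n^{*(\alpha_n)}\otimes f_{\tau,\sigma_j}$, the disjointness $\tau\cap\sigma_j=\varnothing$ means the Eagon--Northcott differential splits into terms contracting an index of $\tau$ --- which keep the full block $\sigma_j$ and so have the form $g^{*\alpha^i}\otimes f_{\tau\backslash\tau_s,\sigma_j}$ --- and terms contracting an index of $\sigma_j$, which lose $\sigma_j$ and are killed by $q^j_{\ell-1}$. Hence $q^j_{\ell-1}d_\ell$ of this element is $(-1)^n\sum_i\sum_s\varepsilon_s\,x_{i\tau_s}\sum_{L'\in\cat{L}_{\alpha^i,\tau\backslash\tau_s}}e_{L'_1}\w\cdots\w e_{L'_{\ell-1}}$, with $\varepsilon_s$ the Eagon--Northcott sign for deleting the $s$-th entry of $\tau$, while $m^j_\ell q^j_\ell$ of it is $(-1)^n\sum_{L\in\cat{L}_{\alpha,\tau}}\sum_t(-1)^{t+1}x_{L_t}e_{L_1}\w\cdots\w\widehat{e_{L_t}}\w\cdots\w e_{L_\ell}$. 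Lemma \ref{reduct} supplies a bijection $(i,s,L')\mapsto L=L'\cup\{(i,\tau_s)\}$ between the index sets of these two sums under which $x_{i\tau_s}=x_{L_t}$ and the two wedge monomials agree up to the sign recorded by $\varepsilon_s$ versus $(-1)^{t+1}$; matching these signs completes the proof, and is precisely the sign identity already checked in the single-generator case (the two computations being identical after substituting $\sigma_j$ for $\sigma$ and $U_j$ for $U$).

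I expect the one genuinely delicate point to be the sign bookkeeping: fixing the ordering of $f_{\tau,\sigma_j}$ and the normalizations of the Eagon--Northcott and Koszul differentials so that $\varepsilon_s$ and $(-1)^{t+1}$ coincide under Lemma \ref{reduct}, and checking that the global factor $(-1)^n$ attached to $q^j_\ell$ for $\ell\geq2$ is compatible with the sign-free $q^j_1$ of the previous proposition --- equivalently, that $m^j_2 q^j_2=q^j_1 d_3$ still holds with these conventions. Everything else is bookkeeping identical to the single-generator verification; in particular the pairwise disjointness of the $\sigma_i$ is not used in this proposition per se (it enters only when these maps are assembled via Theorem \ref{itres}).
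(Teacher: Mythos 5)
Your argument is correct and is exactly the route the paper takes: the paper omits the proof of this proposition precisely because it is the same basis-element computation as the corresponding lemma for Setup \ref{setup6}, namely splitting $d_\ell$ into terms contracting an index of $\tau$ versus an index of $\sigma_j$, killing the latter, and matching the surviving sums via the bijection of Lemma \ref{reduct}. Your observations that pairwise disjointness of the $\sigma_i$ plays no role here, and that the only real care needed is the compatibility of the constant $(-1)^n$ (which cancels on both sides for $\ell\geq 3$ and must be reconciled with the sign-free $q^j_1$ in the $\ell=2$ square) are both apt.
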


\begin{lemma}\label{itqrank}
Adopt notation and hypotheses of Setup \ref{setup7}. Then
$$\rank_k \Bigg( \begin{pmatrix}
q_\ell^1 \\
\vdots \\
q_\ell^r \\
\end{pmatrix}  \otimes k \Bigg) = \binom{n+\ell-1}{\ell} \cdot \sum_{i=1}^r (-1)^{i+1} \binom{r}{i} \binom{m-in}{\ell - (i-1)n}$$ 
\end{lemma}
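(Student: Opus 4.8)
The goal is to compute the rank over $k$ of the stacked map $(q_\ell^1, \dots, q_\ell^r)^T \otimes k$ from $D_\ell(G^*) \otimes \bigwedge^{n+\ell} F$ into $\bigoplus_{j=1}^r \bigwedge^\ell U_j$. The plan is to first reduce the problem to a linear-algebra computation over $k$ by reducing mod $\fm$, so that each $q_\ell^j \otimes k$ becomes the $k$-linear map determined by $g_1^{*(\alpha_1)}\cdots g_n^{*(\alpha_n)} \otimes f_{\tau,\sigma_j} \mapsto (-1)^n \sum_{L \in \cat{L}_{\alpha,\tau}} e_{L_1} \w \cdots \w e_{L_\ell}$ and sending all other basis elements to $0$. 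The key structural observation, already used in the proof of Lemma \ref{qranks}, is that for a fixed $j$ the images of distinct basis elements $g_1^{*(\alpha_1)}\cdots g_n^{*(\alpha_n)} \otimes f_{\tau,\sigma_j}$ are $k$-linearly independent: if $\alpha \neq \alpha'$ then $\cat{L}_{\alpha,\tau} \cap \cat{L}_{\alpha',\tau} = \varnothing$, and varying $\tau$ changes the column-index set. So the only source of rank deficiency in the stacked map is the overlap between the images coming from different $\sigma_j$'s.

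The main step is an inclusion-exclusion count of the dimension of the span of the images. The basis elements with nonzero image under the stacked map are exactly those of the form $g_1^{*(\alpha_1)}\cdots g_n^{*(\alpha_n)} \otimes f_{\tau,\sigma_j}$ for some $j$ — equivalently, $f_{\tau,\sigma_j}$ for an index set $\tau$ of size $\ell$ with $\tau \cap \sigma_j = \varnothing$. But an exterior monomial $f_S$ with $|S| = n+\ell$ can be written in the form $f_{\tau,\sigma_j}$ for the index $j$ precisely when $\sigma_j \subseteq S$; since the $\sigma_j$ are pairwise disjoint and each has size $n$, the set $S$ contains at least $\sigma_{j_1}, \dots, \sigma_{j_i}$ simultaneously only if $|S| = n+\ell \geq in$, i.e. $\ell \geq (i-1)n$, and then the "leftover" part $S \setminus (\sigma_{j_1} \cup \cdots \cup \sigma_{j_i})$ has size $\ell - (i-1)n$. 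I would set up, for each $i \geq 1$, the count of monomials $f_S$ that contain at least $i$ of the chosen index sets: choosing which $i$ of the $r$ sets gives $\binom{r}{i}$, and then the remaining $n+\ell - in = \ell - (i-1)n$ columns are chosen freely from the $m - in$ columns not in those $i$ sets, giving $\binom{m-in}{\ell-(i-1)n}$. An inclusion-exclusion over $i$ then yields $\sum_{i=1}^r (-1)^{i+1}\binom{r}{i}\binom{m-in}{\ell-(i-1)n}$ for the number of monomials $f_S$ such that $\sigma_j \subseteq S$ for \emph{some} $j$, i.e. the number of distinct $f_\tau$-parts that occur. Multiplying by $\rank D_\ell(G^*) = \binom{n+\ell-1}{\ell}$ — the number of divided-power monomials $g_1^{*(\alpha_1)}\cdots g_n^{*(\alpha_n)}$ of degree $\ell$ — gives the claimed formula, \emph{provided} the images are genuinely linearly independent across all these basis elements.

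That last provision is the real content and the main obstacle. One must show that when $f_S$ contains several $\sigma_j$'s, the various ways of writing $f_S = \pm f_{\tau,\sigma_j}$ (one for each $j$ with $\sigma_j \subseteq S$), combined with the divided-power factors, produce images that are linearly independent — not merely that each individually has full rank. Here one uses that the image of $g_1^{*(\alpha_1)}\cdots g_n^{*(\alpha_n)} \otimes f_{\tau,\sigma_j}$ lies in the $j$th summand $\bigwedge^\ell U_j$ of $\bigoplus_j \bigwedge^\ell U_j$, so images landing in different summands are automatically independent; within a single summand $\bigwedge^\ell U_j$, one is back in the situation of Lemma \ref{qranks} where the $\cat{L}_{\alpha,\tau}$ for distinct $(\alpha,\tau)$ are disjoint subsets of the $\ell$-subsets of $\{i : \alpha_i \neq 0\} \times \tau$, hence give distinct basis wedges of $U_j$. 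So the images form a "block" structure indexed by $j$, and the only subtlety is the bookkeeping of which $(f_S, j)$ pairs are counted — precisely what the inclusion-exclusion handles. I would carry this out by: (1) fixing $j$ and recording that $\{$image of $g^{*(\alpha)} \otimes f_{\tau,\sigma_j} : \tau \cap \sigma_j = \varnothing, |\tau| = \ell, |\alpha| = \ell\}$ is a linearly independent set in $\bigwedge^\ell U_j$ of size $\binom{n+\ell-1}{\ell}\binom{m-n}{\ell}$; (2) noting the map out of each source basis vector $g^{*(\alpha)} \otimes f_S$ has a well-defined "block pattern" telling which summands it hits (namely all $j$ with $\sigma_j \subseteq S$), and these patterns, together with the independence within each block, let one compute the rank of the whole stacked map as the number of source basis vectors with \emph{nonzero} image; (3) counting those by inclusion-exclusion as above. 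The signs $(-1)^n$ are harmless since they do not affect ranks. The edge cases — $\ell < (i-1)n$ forcing $\binom{m-in}{\ell-(i-1)n} = 0$, and $i > r$ — are absorbed by the standard convention $\binom{a}{b} = 0$ for $b < 0$ or $b > a$, so the sum $\sum_{i=1}^r$ is really finite with only the relevant terms surviving.
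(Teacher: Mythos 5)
Your proposal is correct and follows essentially the same route as the paper: identify the rank with the number of basis elements $g_1^{*(\alpha_1)}\cdots g_n^{*(\alpha_n)}\otimes f_S$ having nonzero image under the stacked map, and count these by inclusion--exclusion over the collections of $\sigma_j$'s contained in $S$, multiplying by $\rank D_\ell(G^*)=\binom{n+\ell-1}{\ell}$. Your step (2) --- using the block decomposition $\bigoplus_j\bigwedge^\ell U_j$ together with the disjointness of the $\cat{L}_{\alpha,\tau}$ to justify that the images of these basis vectors are genuinely linearly independent --- is a point the paper leaves implicit, so your write-up is, if anything, slightly more careful on the one nontrivial linear-algebra step.
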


\begin{proof}
For convenience, use the notation
$$\rk_\ell := \rank_k \Bigg( \begin{pmatrix}
q_\ell^1 \\
\vdots \\
q_\ell^r \\
\end{pmatrix}  \otimes k \Bigg).$$
As already noted, $\cat{L}_{\alpha' , \tau} \cap \cat{L}_{\alpha,\tau} = \varnothing$ for $\alpha \neq \alpha'$, so $\rk_\ell \leq r \binom{n+\ell -1}{\ell} \binom{m-n}{\ell}$. We want to count all elements
$$g_1^{*(\alpha_1)} \cdots g_n^{*(\alpha_n)} \otimes f_\tau$$
such that there exists $1 \leq j \leq r$ with 
$$0 \neq q_\ell^j (g_1^{*(\alpha_1)} \cdots g_n^{*(\alpha_n)} \otimes f_\tau),$$
taking into account the fact that some elements will have nonzero image under multiple $q^j_\ell$. Thus, we count all elements
$$g_1^{*(\alpha_1)} \cdots g_n^{*(\alpha_n)} \otimes f_\tau$$
such that the image under at least $i$ distinct $q_\ell^j$ is nonzero, then apply the inclusion exclusion principle.

It is easy to see that this set is obtained by choosing all indexing sets $\tau$ with $|\tau|=n+\ell$ such that $\tau = \sigma_{j_1} \cup \cdots \cup \sigma_{j_i} \cup \tau'$ for some $i$ and $\tau'$ with $\tau' \cap \sigma_{j_s} = \varnothing$ for each $s = 1, \dots , i$. Fixing $i$, there are $\binom{r}{i}$ unique choices for the union $\sigma_{j_1} \cup \cdots \cup \sigma_{j_i}$. For the indexing set $\tau'$, there are $m- in$ total choices of indices after removing all elements of the union $\sigma_{j_1} \cup \cdots \cup \sigma_{j_i}$, and we are choosing $\ell+n - in = \ell- (i-1)n$ elements. Using the inclusion-exclusion principle, the total number of indexing sets $\tau$ as above is
$$\sum_{i=2}^r (-1)^{i} \binom{r}{i} \binom{m-in}{\ell - (i-1)n}.$$
Multiplying by $\rank D_\ell (G)$ and subtracting from $r \binom{n+\ell -1}{\ell} \binom{m-n}{\ell}$, we obtain the result. 
\end{proof}

\begin{theorem}\label{subres}
Adopt notation and hypotheses as in Setup \ref{setup7}. Define 
$$\rk_\ell := \binom{n+\ell-1}{\ell} \cdot \sum_{i=1}^r (-1)^{i+1} \binom{r}{i} \binom{m-in}{\ell - (i-1)n}.$$
If $\tau = (\tau_1 < \cdots < \tau_n)$, let $\Delta_\tau$ denote the determinant of the matrix formed by columns $\tau_1 , \dots , \tau_n$ of $M$. Then the ideal
$$K' := (\Delta_\tau \mid \tau \neq \sigma_j, \ j=1 , \dots , r )$$
has Betti table
\begin{equation*}\begin{tabular}{C|C C C C C C C C C}
     & 0 & 1 & \cdots &\ell & \cdots & n(m-n)-1 & n(m-n)  \\
     \hline 
   0  & 1 & \cdot & \cdots &  \cdot &\cdots & \cdot &  \cdot \\
 
 \vdots & & & & & & & \\
   
   n-1 & \cdot & \binom{m}{n}-r & \cdots & \binom{n+\ell-2}{\ell-1} \binom{m}{n+\ell-1} - \rk_{\ell-1} & \cdots & \cdot   & \cdot \\
   
   n & \cdot & \cdot  & \cdots & r \cdot \binom{n(m-n)}{\ell} - \rk_\ell &  \cdots & r \cdot n(m-n)   & r \\
\end{tabular}
\end{equation*}
\end{theorem}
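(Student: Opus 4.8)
The plan is to recognize the ideal $K'$ of the statement as precisely the ideal output by the iterated trimming construction of Setup \ref{setup7}: since $\sigma_i\cap\sigma_j=\varnothing$ for $i\neq j$ one has $\mfa_jK_0^j\subseteq K'$ for every $j$ (as noted right after Setup \ref{setup7}), so the ideal $J=K'+\mfa_1K_0^1+\cdots+\mfa_rK_0^r$ of that setup is equal to $K'$. Hence the iterated trimming complex of Theorem \ref{itres}, built from the Eagon--Northcott resolution $E_\bullet$ of $R/I_n(M)$ and the Koszul resolutions $G^1_\bullet,\dots,G^r_\bullet$ of the complete intersections $R/\mfa_1,\dots,R/\mfa_r$, is a (in general non-minimal) graded free resolution of $R/K'$. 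Since $E_\bullet$ and all the $G^j_\bullet$ are minimal and graded, I would invoke Corollary \ref{ittorrk} together with the graded refinement of Remark \ref{rk:gradedBnos}. This reduces everything to assembling three kinds of numerical data: the ranks $\rank E_i=\binom{n+i-2}{i-1}\binom{m}{n+i-1}$ (for $i\geq1$, with $E_0=R$); the ranks $\rank G^j_i=\binom{n(m-n)}{i}$, together with the fact that each $G^j_\bullet$ has length $n(m-n)$; and the rank $\rk_i$ of the stacked map of Lemma \ref{itqrank}, namely $\rk_i=\binom{n+i-1}{i}\sum_{a=1}^r(-1)^{a+1}\binom{r}{a}\binom{m-an}{i-(a-1)n}$.

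Next I would pin down the single internal degree in which each of these pieces lives, since that is what lets the graded refinement actually separate the rows of the table. The generators of $I_n(M)$ lie in degree $n$ and the Eagon--Northcott differentials are linear, so $E_i$ is generated purely in degree $n+i-1$ for $i\geq1$. In the trimming complex the Koszul summands must be twisted so that the bottom map $\bigoplus_jG^j_1\to R$, which sends $u\mapsto -\sum_j m^j_1(u)\,\Delta_{\sigma_j}$ with each $\Delta_{\sigma_j}$ of degree $n$, is homogeneous; hence $\bigoplus_jG^j_i$ is generated purely in degree $n+i$. The crucial consequence is that the connecting map $\begin{pmatrix}q_i^1\\\vdots\\q_i^r\end{pmatrix}\colon E_{i+1}\to\bigoplus_jG^j_i$ is homogeneous of degree $0$ between free modules each pure of degree $n+i$, while $E_i$ and $\bigoplus_jG^j_{i-1}$ are each pure of degree $n+i-1\neq n+i$. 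Therefore, after minimalizing the mapping cone, the homology in each homological degree $i$ splits into a summand in internal degree $n+i-1$ (contributed by $E_i$, cut down by $\rk_{i-1}$) and a summand in internal degree $n+i$ (contributed by $\bigoplus_jG^j_i$, cut down by $\rk_i$).

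Putting this together with Corollary \ref{ittorrk} gives, for $i\geq2$, $\dim_k\tor_i^R(R/K',k)_{n+i-1}=\binom{n+i-2}{i-1}\binom{m}{n+i-1}-\rk_{i-1}$ and $\dim_k\tor_i^R(R/K',k)_{n+i}=r\binom{n(m-n)}{i}-\rk_i$, with no other nonzero graded Betti numbers in those homological degrees. I would then clean up the ends. In homological degree $1$ the same splitting yields $\dim_k\tor_1^R(R/K',k)_n=\rank E_1-r=\binom{m}{n}-r$ and $\dim_k\tor_1^R(R/K',k)_{n+1}=rn(m-n)-\rk_1$; from the formula of Lemma \ref{itqrank} one checks that $\rk_1=rn(m-n)$ (the $a=1$ term is $n\cdot r(m-n)$ and all $a\geq2$ terms vanish because their lower binomial index is negative), so this last entry is $0$ — equivalently, $K'$ is minimally generated by exactly the $\binom{m}{n}-r$ surviving maximal minors. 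Recording $\dim_k\tor_i^R(R/K',k)_{i+p}$ as the entry in column $i$ and row $p$, so that the two families above occupy rows $n-1$ and $n$ respectively, and noting that $\rank E_i=0$ for $i\geq m-n+2$ while the resolution has length $n(m-n)$, one reads off exactly the displayed Betti table (with $\binom{m}{n}-r$ in column $1$, row $n-1$, and $r$ in column $n(m-n)$, row $n$).

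I expect the only genuine obstacle to be the internal-degree bookkeeping of the previous paragraph — confirming that the twisted Koszul summands sit in degree $n+i$ while the Eagon--Northcott term sits in degree $n+i-1$, so that the non-minimal cone's homology splits cleanly into two rows. Once that is in hand, the theorem is a straightforward assembly of Lemma \ref{itqrank}, Corollary \ref{ittorrk}, and the classical ranks of the Eagon--Northcott and Koszul complexes; no further combinatorics beyond the inclusion--exclusion already carried out in Lemma \ref{itqrank} is needed.
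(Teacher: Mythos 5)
Your proposal is correct and follows essentially the same route as the paper: identify $K'$ with the trimmed ideal $J$ of Setup \ref{setup7}, apply Corollary \ref{ittorrk} to the iterated trimming complex built from the Eagon--Northcott and Koszul resolutions, and feed in the ranks from Lemma \ref{itqrank}. The internal-degree bookkeeping you spell out (Eagon--Northcott terms pure in degree $n+i-1$, twisted Koszul terms pure in degree $n+i$) is exactly what the paper delegates to the graded refinement in Remark \ref{rk:gradedBnos}, and your explicit check that $\rk_1 = rn(m-n)$ kills the degree-$(n+1)$ part of $\tor_1$ is a worthwhile addition rather than a deviation.
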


\begin{proof}
Let $E_\bullet$ denote the Eagon-Northcott complex as in Setup \ref{setup7}. For $\ell \geq 1$,
$$\rank E_\ell = \binom{n+\ell-2}{\ell-1} \binom{m}{n+\ell-1}.$$
Similarly, let $K^j_\bullet$ denote the Koszul complex resolving $\mfa_j$ as in Setup \ref{setup7}. Then for each $j=1, \dots , r$,
$$\rank K^j_\ell = \binom{n(m-n)}{\ell}.$$
Combining the information above with that of Lemma \ref{itqrank}, Corollary \ref{ittorrk}, and the iterated version of Remark \ref{rk:gradedBnos}, we have:
\begingroup\allowdisplaybreaks
\begin{align*}
    \dim_k \tor_\ell^R (R/K' , k)_{n+\ell} &= \rank E_\ell - \rank_k \Bigg( \begin{pmatrix}
q_{\ell-1}^1 \\
\vdots \\
q_{\ell-1}^r \\
\end{pmatrix}  \otimes k \Bigg) \\
    &= \binom{n+\ell-2}{\ell-1} \binom{m}{n+\ell-1}- \rk_{\ell-1}, \\
    \dim_k \tor_\ell^R (R/K' , k)_{n+\ell+1} &= \sum_{j=1}^r \rank K^j_{\ell} - \rank_k \Bigg( \begin{pmatrix}
q_\ell^1 \\
\vdots \\
q_\ell^r \\
\end{pmatrix}  \otimes k \Bigg) \\
    &=r \cdot \binom{n(m-n)}{\ell} - \rk_\ell . \\
\end{align*}
\endgroup
\end{proof}

The following definitions assume familiarity of the reader with the language of simplicial complexes. For an introduction, see, for instance, Chapter $5$ of \cite{bruns1998cohen}. Given a pure $(n-1)$-dimensional simplicial complex $\Delta$, the determinantal facet ideal $J_\Delta$ associated to $\Delta$ is generated by all maximal minors $\det (M_\tau)$, where $\tau = (\tau_1 < \cdots < \tau_n) \in \Delta$ is a facet of $\Delta$.

\begin{definition}\label{fvec}
Let $\Delta$ be a simplicial complex. The $f$-vector $(f_0 (\Delta) , \dots , f_{\dim \Delta} (\Delta) )$ is the sequence of integers with
$$f_i (\Delta ) = | \{ \sigma \in \Delta \mid \dim \sigma = i \} |.$$
\end{definition}

\begin{definition}
A \emph{clutter} $C$ on the vertex set $[n] := \{ 1 , \dots , n \}$ is a collection of subsets of $[n]$ such that no element of $C$ is contained in another. Any element of $C$ is called a \emph{circuit}. If all circuits of $C$ have the same cardinality $m$, then $C$ is called an $m$-\emph{uniform} clutter.

If $C$ is an $m$-uniform clutter, then a \emph{clique} of $C$ is a subset $\sigma$ of $[n]$ such that each $m$-subset $\tau$ of $\sigma$ is a circuit of $C$. 
\end{definition}

\begin{definition}
Let $M$ be a generic $n \times m$ matrix, with $n \leq m$. Given an $n$-uniform clutter $C$ on the vertex set $[m]$, associate to each circuit $\tau = \{ j_1 , \dots , j_n \}$ with $j_1 < \cdots < j_n$ the determinant $\det (M_\tau)$ of the submatrix formed by columns $j_1 , \dots , j_n$ of $M$. 

The ideal $J_C := \{ \det (M_\tau) \mid \tau \in C \}$ is called the \emph{determinantal facet ideal} associated to $C$. 

Similarly, define the \emph{clique complex} $\Delta (C)$ as the associated simplicial complex whose facets are the circuits of $C$. 
\end{definition}

The following definition is introduced in \cite{herzog2017linear}.

\begin{definition}\label{genEN}
Let $\phi : F \to G$ be a homomorphism of free modules of rank $m$ and $n$, respectively. Let $f_1, \dots , f_m$ and $g_1 , \dots , g_n$ denote bases of $F$ and $G$, respectively. Let $\Delta$ be a simplicial complex on the vertex set $[m]$. Then the \emph{generalized Eagon-Northcott complex} $\cat{C}_\bullet (\Delta ; \phi)$ associated to $\Delta$ is the subcomplex
$$0 \to \cat{C}_{m-n+1} \to \cdots \to \cat{C}_1 \to \cat{C}_0$$
of the Eagon-Northcott complex with $\cat{C}_0 = \bigwedge^n G$ and $\cat{C}_\ell \subseteq D_{\ell-1} (G^*) \otimes \bigwedge^{n+\ell-1} F$ for $\ell \geq 1$ the submodule generated by all elements $g_1^{*(\alpha_1)} \cdots g_{n}^{*(\alpha_n)} \otimes f_\sigma$, where $\sigma \in \Delta$ and $\dim \sigma = n+\ell-2$. 
\end{definition}

\begin{remark}\label{rkobs}
Notice that by definition of the $f$-vector in Definition \ref{fvec} combined with Definition \ref{genEN}, for $\ell \geq 1$,
$$\rank C_\ell ( \Delta ; \phi) = \binom{n+\ell-2}{\ell-1} f_{n+\ell-2} (\Delta).$$
\end{remark}

\begin{definition}
Let $F_\bullet$ be a minimal graded complex of free $R$-modules. The \emph{linear strand} $F_\bullet^\textrm{lin}$ of $F_\bullet$ is the complex with $F_i^\textrm{lin} =$ degree $i$ part of $F_i$, and differentials induced by the differentials of $F_\bullet$.
\end{definition}

The following result illustrates the connection between the complex of Definition \ref{genEN} and resolutions of determinantal facet ideals.

\begin{theorem}\cite[Theorem 4.1]{herzog2017linear}\label{linestrand}
Let $\phi : F \to G$ be a homomorphism of free modules of rank $m$ and $n$, respectively. Let $C$ be an $n$-uniform clutter on the vertex set $[m]$ with associated simplicial complex $\Delta(C)$. Let $J_C$ denote determinantal facet ideal associated to $C$, with minimal free resolution $\cat{F}_\bullet$. Then,
$$\cat{F}^{\textrm{lin}}_i = \cat{C}_i (\Delta(C) ; \phi),$$
where $\cat{F}^{\textrm{lin}}_\bullet$ denotes the linear strand of the complex $\cat{F}_\bullet$.
\end{theorem}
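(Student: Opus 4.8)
The plan is to realize $\cat C_\bullet:=\cat C_\bullet(\Delta(C);\phi)$ as the subcomplex of the minimal free resolution $\cat F_\bullet$ of $R/J_C$ that occupies exactly its top linear strand, by constructing an explicit comparison map and then pinning it down with a Betti-number count. To begin I would record the structural facts about $\cat C_\bullet$. Since $\Delta(C)$ is a simplicial complex (hence closed under passing to faces), and since the Eagon--Northcott differential of Definition \ref{EN} sends a basis element $g_1^{*(\alpha_1)}\cdots g_n^{*(\alpha_n)}\otimes f_\sigma$ to an $R$-linear combination of elements $g^{*(\beta)}\otimes f_{\sigma'}$ with $\sigma'$ a face of $\sigma$, the graded submodules $\cat C_\ell\subseteq D_{\ell-1}(G^*)\otimes\bigwedge^{n+\ell-1}F$ of Definition \ref{genEN} form a subcomplex of the full Eagon--Northcott complex $E_\bullet$. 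Consequently $\cat C_\bullet$ is a complex of free modules whose differentials in homological degrees $\geq 2$ are matrices of linear forms (they are restrictions of multiplication by $c_\phi\in F^*\otimes G$), whose map $\cat C_1\to\cat C_0=\bigwedge^n G\cong R$ is given by the degree-$n$ maximal minors $\Delta_\tau$ indexed by the $n$-faces of $\Delta(C)$ (i.e.\ the circuits of $C$), and with $\coker(\cat C_1\to\cat C_0)=R/J_C$. Since the $\Delta_\tau$ are $k$-linearly independent in $R_n$ and $(\m J_C)_n=0$, they form a minimal generating set of $J_C$; thus $\cat C_1$ is free on the minimal generators and $\mu(J_C)=f_{n-1}(\Delta(C))$.

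Next, because $\cat F_\bullet$ is a free resolution of $R/J_C$ and $\cat C_\bullet$ is a complex of projectives with $H_0(\cat C_\bullet)=R/J_C$, the comparison theorem for free resolutions supplies a morphism $\gamma_\bullet:\cat C_\bullet\to\cat F_\bullet$ lifting $\id_{R/J_C}$, unique up to homotopy; by the previous paragraph $\gamma_0=\id_R$ and $\gamma_1$ is an isomorphism (a map over $\id$ between minimal free covers of $J_C$). For $i\geq 1$ the module $\cat C_i$ is generated in degree $n+i-1$, while $\cat F_i$ is generated in degrees $\geq n+i-1$ with degree-$(n+i-1)$ summand equal to the linear-strand piece $\cat F_i^{\mathrm{lin}}$; since $\gamma_i$ is degree-preserving it carries $\cat C_i$ into $\cat F_i^{\mathrm{lin}}$, and a short check (the linear-strand differential being the induced one) shows $\gamma_\bullet$ restricts to a morphism of complexes $\cat C_\bullet\to\cat F_\bullet^{\mathrm{lin}}$ that is an isomorphism in homological degrees $0$ and $1$. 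It remains to prove $\gamma_i:\cat C_i\to\cat F_i^{\mathrm{lin}}$ is an isomorphism for all $i$, equivalently that $\gamma_i\otimes_R k$ is.

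For the last point, all differentials of $\cat C_\bullet$ have entries in $\m$, so $H_i(\cat C_\bullet\otimes_R k)=\cat C_i\otimes_R k$, while by minimality $H_i(\cat F_\bullet\otimes_R k)=\Tor_i^R(R/J_C,k)$; under these identifications $\gamma_i\otimes k$ is the induced $\Tor$-map, with image in the component $\Tor_i^R(R/J_C,k)_{n+i-1}\cong\cat F_i^{\mathrm{lin}}\otimes k$. Injectivity I would obtain by sending each basis element $g_1^{*(\alpha_1)}\cdots g_n^{*(\alpha_n)}\otimes f_\sigma$ of $\cat C_i$ to an explicit nonzero $\Tor$-class and using that the combinatorial index sets attached to distinct basis elements are pairwise disjoint (the disjointness device already used in the proof of Lemma \ref{qranks}), which forces linear independence of the images. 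Surjectivity onto $\Tor_i^R(R/J_C,k)_{n+i-1}$ then reduces to the numerical identity $\dim_k\Tor_i^R(R/J_C,k)_{n+i-1}=\rank\cat C_i=\binom{n+i-2}{i-1}f_{n+i-2}(\Delta(C))$ (using Remark \ref{rkobs} for the rank), which I would prove by induction on the number of circuits of $C$: passing from $C$ to $C\cup\{\tau_0\}$ via the short exact sequence $0\to J_C\to J_{C\cup\{\tau_0\}}\to(R/(J_C:\Delta_{\tau_0}))(-n)\to 0$ and tracking the resulting long exact $\Tor$-sequence in the single internal degree $n+i-1$, with base case $C=\binom{[m]}{n}$, where $\cat C_\bullet=E_\bullet$ is the entire (minimal) resolution.

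The hard part is this last numerical identity, i.e.\ controlling the linear strand of $R/J_C$ on its own terms: one must show $J_C$ has precisely the linear syzygies recorded by $\cat C_\bullet$ and no others. The subcomplex $\cat C_\bullet\subseteq E_\bullet$ is not acyclic in positive degrees (its higher homology is built out of $I_n(M)/J_C$), so there is no formal shortcut through uniqueness of resolutions; and in the inductive step the colon ideals $J_C:\Delta_{\tau_0}$ can be complicated, so the genuine content is arguing that the connecting homomorphisms of the $\Tor$ long exact sequence contribute nothing in internal degree $n+i-1$. A workable alternative to the dimension count would be to write down a one-sided inverse to $\gamma_\bullet$ in the range of the linear strand directly from the explicit formulas of Setup \ref{setup6}, trading the homological bookkeeping for a longer but entirely elementary combinatorial verification.
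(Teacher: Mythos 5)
A preliminary remark: the paper does not prove this statement — it is imported as \cite[Theorem 4.1]{herzog2017linear} and used as a black box in Corollary \ref{cor:fvect} — so there is no internal argument to compare yours against; your proposal must stand on its own as a proof of the cited result. Its framework is sound and is the natural one: $\cat{C}_\bullet(\Delta(C);\phi)$ is indeed a subcomplex of the Eagon--Northcott complex (the differential only lowers $\sigma$ to codimension-one faces, and $\Delta(C)$ is face-closed), $\cat{C}_1\to\cat{C}_0$ does present $R/J_C$ with $\cat{C}_1$ a minimal free cover, the comparison theorem gives a lift $\gamma_\bullet:\cat{C}_\bullet\to\cat{F}_\bullet$ even though $\cat{C}_\bullet$ is not acyclic, and the degree bookkeeping correctly forces $\gamma_i(\cat{C}_i)\subseteq\cat{F}_i^{\mathrm{lin}}$.

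However, the two claims that actually constitute the theorem are only gestured at, and you say as much yourself. (a) Injectivity of $\gamma_i\otimes k$: producing a ``nonzero $\tor$-class'' for each basis element $g^{*(\alpha)}\otimes f_\sigma$ does not give linear independence of the images, and the disjointness of the sets $\cat{L}_{\alpha,\tau}$ from Lemma \ref{qranks} is a statement about an explicitly constructed map into a Koszul complex in the special setting of Setup \ref{setup6}; it says nothing about classes in $\tor_i^R(R/J_C,k)$ for a general clutter, nor about the unknown lift $\gamma_i$. (b) The identity $\dim_k\tor_i^R(R/J_C,k)_{n+i-1}=\binom{n+i-2}{i-1}f_{n+i-2}(\Delta(C))$ is exactly the assertion that $J_C$ has no linear syzygies beyond those of $\cat{C}_\bullet$, i.e.\ it \emph{is} the theorem; your proposed induction on circuits founders precisely where you flag it, since for a general $n$-uniform clutter the colon ideal $J_C:\Delta_{\tau_0}$ is not under control (the clean complete-intersection description of $\mfa$ in Setup \ref{setup6} is special to removing a minor from \emph{all} maximal minors), and the vanishing of the connecting maps in internal degree $n+i-1$ is not addressed. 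So the proposal reduces the theorem to its genuinely hard content without resolving it; to close the argument one needs an independent computation of the linear strand (in \cite{herzog2017linear} this is done by entirely different means), not the comparison-map formalism alone.
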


\begin{cor}\label{cor:fvect}
Let $C$ denote the $n$-uniform clutter on the vertex set $[m]$ obtained by removing $r$ pairwise disjoint elements from all $n$-subsets of $[m]$. Then for $\ell \geq 1$,
$$f_{n+\ell-2} (\Delta (C)) = \binom{m}{n+\ell-1} - \sum_{i=1}^r (-1)^{i+1} \binom{r}{i} \binom{m-in}{\ell - (i-1)n}$$
\end{cor}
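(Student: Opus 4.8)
The plan is to extract $f_{n+\ell-2}(\Delta(C))$ by comparing two descriptions of the linear strand of the minimal graded free resolution of $R/J_C$: the one coming from Theorem~\ref{subres}, which gives the whole Betti table of $R/J_C$, and the one coming from Theorem~\ref{linestrand} together with Remark~\ref{rkobs}, which realizes the linear strand as a generalized Eagon--Northcott complex whose ranks encode the $f$-vector of $\Delta(C)$.

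First I would observe that $C$ is precisely the clutter implicit in Setup~\ref{setup7}: its circuits are the $n$-subsets $\tau\subseteq[m]$ with $\tau\neq\sigma_j$ for all $j=1,\dots,r$, so its determinantal facet ideal is $J_C=(\Delta_\tau\mid \tau\neq\sigma_j,\ j=1,\dots,r)=K'$, and $\Delta(C)$ is the clique complex of $C$. Hence Theorem~\ref{subres} computes the graded Betti numbers of $R/J_C$; in particular its Betti table has exactly two nonzero rows, indexed $n-1$ and $n$, and the $(n-1)$-st row is the linear strand. Reading off column $\ell$ of that row, for $\ell\geq 1$ the $\ell$-th term of the linear strand $\cat{F}^{\textrm{lin}}_\bullet$ of the minimal free resolution of $R/J_C$ satisfies
\[
\rank \cat{F}^{\textrm{lin}}_\ell=\dim_k\tor_\ell^R(R/J_C,k)_{n+\ell-1}=\binom{n+\ell-2}{\ell-1}\binom{m}{n+\ell-1}-\rk_{\ell-1},
\]
where $\rk_{\ell-1}$ is the rank over $k$ of the stacked comparison map built from the $q^j_{\ell-1}$, evaluated in closed form in Lemma~\ref{itqrank}.

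On the other hand, Theorem~\ref{linestrand} identifies $\cat{F}^{\textrm{lin}}_\ell$ with the generalized Eagon--Northcott module $\cat{C}_\ell(\Delta(C);M)$ of Definition~\ref{genEN}, and Remark~\ref{rkobs} gives $\rank \cat{C}_\ell(\Delta(C);M)=\binom{n+\ell-2}{\ell-1}f_{n+\ell-2}(\Delta(C))$. Equating the two expressions for $\rank\cat{F}^{\textrm{lin}}_\ell$ and dividing through by $\binom{n+\ell-2}{\ell-1}$, which is nonzero for $\ell\geq 1$, yields
\[
\binom{n+\ell-2}{\ell-1}f_{n+\ell-2}(\Delta(C))=\binom{n+\ell-2}{\ell-1}\binom{m}{n+\ell-1}-\rk_{\ell-1};
\]
since the closed form for $\rk_{\ell-1}$ from Lemma~\ref{itqrank} has prefactor exactly $\binom{n+\ell-2}{\ell-1}$, cancelling it and expanding produces the claimed identity, with the values of $\ell$ beyond the length of the linear strand reducing to $0=0$ under the convention $\binom{a}{b}=0$ for $b<0$ or $b>a$.

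I expect the only genuine difficulty to be the bookkeeping of homological and internal degrees. One must check that the resolution in Theorem~\ref{linestrand} is that of $R/J_C$, so that $\cat{C}_0=\bigwedge^n G=R$ matches $\cat{F}_0$ and the indexing of $\cat{C}_\bullet$ agrees with that of $\cat{F}_\bullet$; that the $\ell$-th linear strand term lives in internal degree $n+\ell-1$; and that it is $\rk_{\ell-1}$, not $\rk_\ell$, that enters homological degree $\ell$ of the iterated trimming mapping cone in that degree, since the comparison maps landing in the Koszul summands in homological degree $\ell-1$ are the $q^j_{\ell-1}$, exactly as in Corollary~\ref{ittorrk}. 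As an independent check one can bypass the resolution entirely: since $\Delta(C)=\{\,\tau\subseteq[m]:\sigma_j\not\subseteq\tau\text{ for all }j\,\}$, inclusion--exclusion over the $r$ pairwise disjoint events ``$\sigma_j\subseteq\tau$'' gives
\[
f_d(\Delta(C))=\sum_{i=0}^{r}(-1)^i\binom{r}{i}\binom{m-in}{\,d+1-in\,},
\]
which on setting $d=n+\ell-2$ agrees with the expression obtained above from the linear strand.
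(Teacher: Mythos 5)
Your proposal follows exactly the route of the paper's own (three-line) proof: identify $J_C$ with the ideal $K'$ of Setup \ref{setup7}, read off $\rank \cat{F}^{\textrm{lin}}_\ell = \binom{n+\ell-2}{\ell-1}\binom{m}{n+\ell-1}-\rk_{\ell-1}$ from Theorem \ref{subres}, equate this with $\binom{n+\ell-2}{\ell-1}f_{n+\ell-2}(\Delta(C))$ via Theorem \ref{linestrand} and Remark \ref{rkobs}, and cancel the common prefactor. The degree bookkeeping you flag (that the linear strand in homological degree $\ell$ sits in internal degree $n+\ell-1$ and picks up $\rk_{\ell-1}$ rather than $\rk_\ell$) is handled correctly, and the closing inclusion--exclusion count is a genuinely independent verification that the paper does not supply.

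One step does not go through as written, namely ``expanding produces the claimed identity.'' Substituting $\ell-1$ for $\ell$ in the definition of $\rk_\ell$ gives
\[
\rk_{\ell-1}=\binom{n+\ell-2}{\ell-1}\sum_{i=1}^r(-1)^{i+1}\binom{r}{i}\binom{m-in}{\ell-1-(i-1)n},
\]
so after cancelling $\binom{n+\ell-2}{\ell-1}$ the lower index of the last binomial is $\ell-1-(i-1)n=n+\ell-1-in$, not $\ell-(i-1)n$ as in the statement you are proving. Your own cross-check confirms this: setting $d=n+\ell-2$ in $\binom{m-in}{d+1-in}$ yields $\binom{m-in}{\ell-1-(i-1)n}$, which agrees with the trimming-complex computation but \emph{disagrees} with the displayed formula (for $n=2$, $m=4$, $r=1$, $\ell=1$ the displayed formula gives $4$, whereas $f_1(\Delta(C))=5$). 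So the verification you present as confirming the statement in fact exposes an index shift in it; the method is sound and matches the paper's, but the conclusion should read $\binom{m-in}{\ell-1-(i-1)n}$, and the claimed agreement at the end needs to be corrected accordingly.
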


\begin{proof}
Let $\phi : F \to G$ be a generic homomorphism of free modules of rank $m$ and $n$, respectively, and let $J_C$ denote the determinantal facet ideal associated to $C$ with minimal free resolution $\cat{F}_\bullet$. By Theorem \ref{subres} with $\ell \geq 1$,
$$\rank \cat{F}^{\textrm{lin}}_\ell =  \binom{n+ \ell-2}{\ell -1} \Bigg( \binom{m}{n+\ell-1} - \sum_{i=1}^r (-1)^{i+1} \binom{r}{i} \binom{m-in}{\ell - (i-1)n} \Bigg).$$
Combining this with Theorem \ref{linestrand} and Remark \ref{rkobs}, the result follows.
\end{proof}

\section*{Acknowledgements}

The author wishes to thank the anonymous referee for many helpful comments that greatly enhanced the final draft of this paper.

\bibliographystyle{amsplain}
\bibliography{biblio}
\addcontentsline{toc}{section}{Bibliography}

\end{document}